\renewcommand{\Re}{\mathop{\rm Re}}
\renewcommand{\Im}{\mathop{\rm Im}}
\newcommand{\R}{\mathfrak{t}}
\newcommand{\s}{\mathfrak{s}}
\theoremstyle{plain}
\newtheorem{theorem}{Theorem}[section]
\newtheorem{proposition}[theorem]{Proposition}
\newtheorem{corollary}[theorem]{Corollary}
\newtheorem{remark}[theorem]{Remark}
\begin{document}

\title{\Large{On semi-classical weight functions on the unit circle}}

\author
{Cleonice F. Bracciali$^{a,}$\thanks{cleonice.bracciali@unesp.br (corresponding author)} , \
Karina S. Rampazzi$^{a,}$\thanks{karina.rampazzi@unesp.br.} ,  \
Luana L. Silva Ribeiro$^{a, b,}$\thanks{luanadelimasr@gmail.com.} \\[1ex]
  {\small $^{a}$Departamento de Matem\'{a}tica, IBILCE, UNESP\,-\,Universidade Estadual Paulista,} \\
  {\small 15054-000, S\~{a}o Jos\'{e} do Rio Preto, SP, Brazil.}\\[1ex]
   {\small $^{b}$Departamento de Matem\'{a}tica, UnB\,-\,Universidade de Bras\'{\i}lia,} \\
  {\small 70910-900, Bras\'{\i}lia, DF, Brazil.}\\[1ex] 
}

\maketitle

\thispagestyle{empty}

\begin{abstract}
We consider orthogonal polynomials on the unit circle  associated with certain semi-classical weight functions.
This means that the Pearson-type differential equations satisfied by these weight functions involve two polynomials of degree at most 2. 
We determine all such semi-classical weight functions and this also includes an extension of the Jacobi weight function on the unit circle. 
General structure relations for the orthogonal polynomials and non-linear difference equations for the associated complex Verblunsky coefficients are established. 
As application, we present several new structure relations and non-linear difference equations associated with some of these semi-classical weight functions.
\end{abstract}

{\noindent}Keywords:  Orthogonal polynomials on the unit circle; semi-classical weight functions; difference equations; structure relations. \\

{\noindent}2020 Mathematics Subject Classification: 42C05, 33C47

\setcounter{equation}{0}
\section{Introduction} 
Let $\mu$ be a positive measure on the unit circle $\mathbb{T} = \{z \in \mathbb{C} : |z|=1 \}$. The
associated sequence of orthonormal polynomials on the unit circle, $\{\Psi_n\}_{n\geqslant 0}$, satisfies
\begin{equation*}
\langle \Psi_n,\Psi_m  \rangle = \int_{\mathbb{T}} \Psi_n (z)\overline{\Psi_m(z)}  d \mu (z)  =
\int_{0}^{2\pi} \Psi_n (e^{i\theta})\overline{\Psi_m(e^{i\theta})}  d \mu (e^{i\theta})  = \delta_{n,m},  
\end{equation*}
where $\Psi_n (z) =  \kappa_n z^n + \cdots $, with $\kappa_n>0.$
The monic orthogonal polynomials on the unit circle (MOPUC, in short) are denoted by
$ \Phi_n(z)$, i.e., $ \Phi_n(z) = \Psi_n(z) / \kappa_n$. Hence
$||\Phi_n ||^2 = \int_{\mathbb{T}} |\Phi_n (z)|^2 d \mu (z)= \kappa_n^{-2}$. For recent basic literature  on this subject see, for example, \cite{Is05} and \cite{Simon-Book-p1}.

These polynomials satisfy the relation
\begin{equation}  \label{Szegorecorrence}
\Phi_{n}(z) = z\Phi_{n-1}(z) - \overline{\alpha}_{n-1} \Phi_{n-1}^{*}(z), \quad n \geqslant 1,
\end{equation}
where $\Phi_{0}(z) =1,$ $\Phi_{n}^*(z) =z^n \overline{\Phi_{n}(1/\overline{z})}$ is the reciprocal polynomial and $\alpha_{n-1} = - \overline{\Phi_{n}(0)}$ are known as Verblunsky coefficients.
From relation \eqref{Szegorecorrence}, it is known that
$ \kappa_{n}^{2}/\kappa_{n+1}^{2} =  1-|\alpha_n|^2.$

We denote  the MOPUC as
\begin{equation}  \label{pol_expli}
\Phi_{n}(z) =  z^n + \gamma_{n} z^{n-1}  + \cdots + \beta_{n} z - \overline{\alpha}_{n-1}, \quad  n \geqslant 0,
\end{equation}
with $\alpha_{-1} = -1$.
Using relation \eqref{Szegorecorrence} it is easy to show that the coefficients $\gamma_{n}$ in \eqref{pol_expli} are given in terms of the Verblunsky coefficients as 
\begin{equation}  \label{lnn1}
\gamma_{n} = \sum_{j=0}^{n-1} \overline{\alpha}_{j} \alpha_{j-1}, \ n \geqslant 1, \quad \mbox{and} \quad \gamma_0=0.
\end{equation}
Notice that $\gamma_n = \gamma_{n-1} + \overline{\alpha}_{n-1} \alpha_{n-2}.$  
Taking the derivative of \eqref{Szegorecorrence} and evaluating in $z=0$, one can see that the coefficients $\beta_{n} = \Phi^{\prime}_{n}(0)$ satisfy
\begin{equation}  \label{rnn1}
\beta_{n}  = -(\overline{\alpha}_{n-2} + \overline{\alpha}_{n-1} \overline{\gamma}_{n-1}) =  -[(1-|\alpha_{n-1}|^2)\overline{\alpha}_{n-2}+\overline{\alpha}_{n-1}\overline{\gamma}_{n}], \quad n \geqslant 1.
\end{equation}
The latter right hand side equality comes from \eqref{lnn1}.

Since $\Phi_{n}(z) =  z^n + \gamma_{n} z^{n-1}  + O(z^{n-2}),$
hence $\langle \Phi_{n-1},\Phi_{n} \rangle =0$ yields
\begin{equation} \label{fato1}
\langle \Phi_{n-1},z^{n} \rangle =- \overline{\gamma}_{n} \langle \Phi_{n-1},\Phi_{n-1} \rangle.
\end{equation}

In this work, we consider $\mu$ to be absolutely continuous and we set $d \mu(e^{i\theta}) = \nu(e^{i\theta}) d\theta. $
Therefore, we can write
\begin{equation*}
\langle f(z), z^n \rangle 
= \int_{0}^{2\pi} f(e^{i\theta}) e^{-i n \theta} \nu(e^{i\theta}) d\theta  = \int_{\mathbb{T}} f(z) z^{-n} \nu(z) \frac{dz}{iz}.
\end{equation*}

For simplicity, we denote $w(\theta)=\nu(e^{i\theta})$. 
The moments of the weight function $w$ are given by
$  \mu_n =  \langle 1,z^{n} \rangle =
\int_{0}^{2\pi} e^{-i n \theta} w (\theta) d\theta, $
for $n \in \mathbb{Z}$.

\medskip

In this paper we analyse the properties of MOPUC associated with semi-classical weight function on the unit circle. 
A weight function on the unit circle $w$ is called semi-classical if 
\begin{equation}\label{Eq-Tipo-Pearson-1}
\frac{d}{d\theta} \left[ A (e^{i\theta})  w(\theta) \right]  = B(e^{i\theta})  w(\theta).
\end{equation}
where the functions $A$ and $B$ are Laurent polynomials and 
$A(e^{i\theta})=0$ at the singular points of $1/w$, 
see \cite{AlMa91} and  \cite{Ma00}.
The  equation \eqref{Eq-Tipo-Pearson-1} will be called a Pearson-type differential equation.

The relation \eqref{Eq-Tipo-Pearson-1} can also be written as
\begin{equation}  \label{Eq-Tipo-Pearson-2}
\frac{d w(\theta)/ d \theta}{w(\theta)} = \frac{w'(\theta)}{w(\theta)}=\frac{B(e^{i\theta})-ie^{i\theta}\frac{dA(e^{i\theta})}{d \, e^{i\theta}}}{A(e^{i\theta})}.
\end{equation}  

For orthogonality on the unit circle, the problem of classifying semi-classical OPUC  usually is studied considering linear functionals that satisfy a Pearson-type equation, see for instance,  \cite{CaSu97}, \cite{Su01},  and  \cite{Su08}. This approach can be adapted to the equation \eqref{Eq-Tipo-Pearson-1} by considering that if $\deg A(z) = p$ and $\max\{p-1, \deg((p-1)A(z) + iB(z))\}=q,$ then $w$ belongs to the class $(p,q)$ of semi-classical weight functions. A weight function that belongs to the class $(p,q)$ also  belongs to the class $(p+1,q+1).$

In the theory of orthogonal polynomials on the real line, it is well known that if the weight function $\omega$, defined on an  interval $(a,b)$, satisfies the Pearson equation $[\sigma(x)  \omega(x)]^{\prime} = \rho(x)  \omega(x)$, where $\sigma$ is polynomial of degree at most 2 and $\rho$ is a polynomial of degree 1, then the weight function  is classical, and the associated orthogonal polynomials are the classical orthogonal polynomials. If $\sigma$ is polynomial of degree greater than 2 or $\rho$ is a polynomial of degree different from 1, the weight function  is known as semi-classical.
If $\sigma\omega$ vanishes at the extreme points of the orthogonality interval, the orthogonal polynomials associated with classical or semi-classical weight function on the real line satisfy differential-recurrence relations (also known as structure relations). See, for example, \cite{BLN87, Bra96, Maroni84, Maroni90, Va08}. 

The recurrence coefficients of orthogonal polynomials on the real line associated with certain semi-classical weight functions (for example, the Freud weight function) satisfy difference equations and are related to some Painlev\'{e} equations. See \cite{Fr76, Ma86, Va08}.

A known example of semi-classical weight function on the unit circle is $w(\theta) = e^{t \cos(\theta)},$ with $t>0$,  see \cite{Is05}. The associated MOPUC satisfy the structure relation
\begin{equation}  \label{rel_est_conhecida}
\Phi_{n}^{\prime}(z) = n \Phi_{n-1}(z)
+ \frac{t \kappa_{n-2}^2}{2 \ \kappa_{n}^2} \Phi_{n-2}(z), \quad n \geqslant 2.
\end{equation}
The Verblunsky coefficients are real and they depend on the parameter $t$. From the structure relation for the MOPUC, it is possible to show that the Verblunsky coefficients  satisfy the non-linear difference equation
\begin{equation}  \label{Painleve}
\alpha_{n}(t) + \alpha_{n-2}(t) = -\frac{2n}{t} \frac{\alpha_{n-1}(t)}{1-\alpha_{n-1}^2(t)}, \quad n \geqslant 2.
\end{equation}
This well known equation appeared in Periwal and Shevitz  \cite{PS90}.
It corresponds to the Painlev\'{e} discrete equation ${\rm dP_{II}}$, see \cite{Va08} and the references therein.
Studies about semi-classical weight functions on the unit circle can be found also in \cite{BraRe12, CaSu97, Ma00, Su08}.

The objective of this paper is to investigate semi-classical weight functions on the unit circle, $w$, whose MOPUC have complex Verblunsky coefficients. 
We restrict this investigation to semi-classical weight functions for which  $A$ and $B$ in \eqref{Eq-Tipo-Pearson-1} are polynomials of degree at most 2.
We remark that in \cite{CaSu97} the authors investigated the particular case for which  $A$ is a polynomial of degree exactly 2 and it has distinct zeros. Furthermore, in \cite{Su01} the case $A(z) = (z-r)^2$, with $B(r) - i r A^{\prime}(r)=0,$ has been considered.

One goal  here is to determine all semi-classical weight functions on the unit circle that belong to this class.  Other aim is to present structure relations for the associated MOPUC and  non-linear difference equations for the associated complex Verblunsky coefficients, for this class of semi-classical weight functions on the unit circle. We would like to point out that in \cite{IsWi01} a general analysis of ladder operators associated with a weight function of the type $w (\theta) = e^{-v(\theta)}$ was established and structure relations for the associated OPUC were  presented. 

This paper is structured as follows. In Section \ref{sec_WF} we use the definition of semi-classical weight function on the unit circle \eqref{Eq-Tipo-Pearson-1} to set the form of the structure relation for the associated MOPUC, see Theorem \ref{prop_rel_est}. In Theorem \ref{TEO_eq_de_dif_caso_geral} we present two non-linear difference equations  for the associated complex Verblunsky coefficients, which is one main result.

In Section \ref{sec_Characterization}
we determine all semi-classical weight functions on the unit circle, $w$, for which $A$ and $B$ in \eqref{Eq-Tipo-Pearson-1} are polynomials of degree at most 2.   
 The method used is to consider all possible zeros of polynomial $A$ such that $w$ is positive.
Many known semi-classical weight functions were determined and we also found the semi-classical weight function 
\begin{equation*}
w(\theta)= \tau(\lambda,\beta,\eta) \, e^{-\eta \theta}[\sin^2(\theta/2)]^{\lambda}[\cos^2(\theta/2)]^{\beta},
\end{equation*}
where $\eta \in \mathbb{R}$, $\lambda >-1/2$, $\beta >-1/2$ and  $\tau(\lambda,\beta,\eta)$ is a constant, see \eqref{newfunction}. 
Particular cases of this weight function cover most of the semi-classical weight functions known in the literature, see Remark \ref{remark36}. 
However,   to the best of our knowledge, 
the case $\lambda\beta\eta \neq 0$ has never been considered before.

As application of these results, in Section \ref{sec_Application}, we present structure relations for the MOPUC and non-linear difference equations for the Verblunsky coefficients, using the semi-classical weight functions described in Section \ref{sec_Characterization}. 
In particular, in Subsection \ref{subsection_exemplo1} we present several structure relations and several non-linear difference equations for the complex Verblunsky coefficients associated with
$w(\theta)= e^{-\theta\eta}[\sin^2(\theta/2)]^{\lambda},$
$\eta \in \mathbb{R}$, $\lambda >-1/2$. 
Subsection \ref{subsection42} is dedicated to the weight function $w(\theta)= e^{2|u| \sin(\theta + \arg  u)}$, $u \in \mathbb{C}$, where we present the non-linear difference equation \eqref{ED_iu}, that can be seen as a complex extension of the Painlev\'{e} discrete equation 
${\rm dP_{II}}$.
 
\setcounter{equation}{0}
\section{Structure relations and non-linear difference equations} 
\label{sec_WF}

Since we assume $A$ and $B$ to be complex polynomials of degree at most $2$, we set
\begin{equation*}
 A(z) = a_{2} z^2 + a_{1}z + a_{0} \quad \mbox{and} \quad
 B(z) = b_{2} z^2 + b_{1}z + b_{0}.
\end{equation*}
Thus the Pearson-type equation \eqref{Eq-Tipo-Pearson-2} becomes
\begin{equation} \label{Eq-Tipo-Pearson-3}
\frac{dw(\theta)/d\theta}{w(\theta)} 
 = \frac{(b_{2}-2ia_{2})z^2+(b_{1}-ia_{1})z+b_{0}}{a_{2}z^2+a_{1}z+a_{0}}, \ \ z=e^{i\theta}.
\end{equation}
Equivalently, we can write
\begin{equation} \label{Eq-Tipo-Pearson-4}
i\frac{dw(\theta)/dz}{w(\theta)}= \frac{(b_{2}-2ia_{2})z^2+(b_{1}-ia_{1})z+b_{0}}{z(a_{2}z^2+a_{1}z+a_{0})}, \ \ z=e^{i\theta}.
\end{equation}

Before introducing our results we need some results from  Magnus \cite{Ma00}. For example, if $w$ satisfies \eqref{Eq-Tipo-Pearson-2} with $A$ and $B$  polynomials of degree at most $d$, with $d \geqslant 1$, and $w(0)=w(2\pi)$, then
$
\langle A\Phi_{n}',z^{k} \rangle = \langle \Phi_{n} [iB(z)+(k+1)A(z)], z^{k+1} \rangle,$ $n \geqslant 2$ 
and 
$\langle A\Phi_{n}',z^{k} \rangle =0,$ for $  k=d-1,d,\ldots,n-2.$

Indeed, using integration by parts and property \eqref{Eq-Tipo-Pearson-2}, one can see that
\begin{equation} \label{Aphilinhazk}
\langle A\Phi_{n}',z^{k} \rangle  
 = \langle \Phi_{n}(z) [iB(z)+(k+1)A(z)], z^{k+1} \rangle
- i \Phi_{n}(1) A(1)[w(2\pi) - w(0)],
\end{equation}
for $k=0,1,\ldots,n$. However, for $k=d-1,d,\ldots,n-2$, the polynomial $[iB(z)+(k+1)A(z)]z^{-k-1}$ $ \in \mbox{Span}\{z^{-(n-1)}, z^{-(n-2)}, \ldots, $ $ z^{-1}\}$,  hence $\langle \Phi_{n} [iB(z)+(k+1)A(z)], z^{k+1} \rangle=0.$

In this work, we consider $d=2$ and the condition $A(1)[w(2\pi) - w(0)]=0$, hence  
\begin{equation} \label{orthogonality-A}
\langle A \Phi_{n}' ,z^{k} \rangle =0, \quad k=1,2,\ldots,n-2.
\end{equation}

Magnus in \cite{Ma00} has shown that if $A$ and $B$ are polynomials of degree  $\leqslant d$, then
$ A\Phi_{n}'$  is a linear combination of 
$ \Phi_{n+d-1}, \Phi_{n+d-2},\ldots, \Phi_{n-1}$ and  
$ \Phi_{n+d-1}^{*}, z\Phi_{n+d-2}^{*},\ldots, z^{d-2}\Phi_{n+1}^{*}$.

\medskip

The next result presents structure relations of the  MOPUC in terms of the polynomials $\Phi_{n+1}$, $\Phi_{n}$, $\Phi_{n-1},$ and $\Phi_{n}^{*}$ (instead of  $\Phi_{n+1}^{*}$)  when the associated weight function satisfies \eqref{Eq-Tipo-Pearson-2} with  $A$ and $B$ polynomials of degree at most 2 and $A(1)[w(2\pi) - w(0)]=0$. All the coefficients are given explicitly.

\begin{theorem} \label{prop_rel_est}
Consider a weight function that satisfies \eqref{Eq-Tipo-Pearson-2}, where  $A$ and $B$ are polynomials of degree at most 2 and $A(1)[w(2\pi) - w(0)]=0$. If 
$ A(z)=a_{2}z^2+a_{1}z+a_{0} $ and $ B(z)=b_{2}z^2+b_{1}z+b_{0},$
then the associated MOPUC  satisfy the structure relation given by
\begin{equation}  \label{Est-geral2-nova}
A(z)\Phi_{n}'(z)=
n a_2 \Phi_{n+1}(z)
+ \s_{n,n}\Phi_{n}(z)
+ \s_{n,n-1}\Phi_{n-1}(z)
+ \R_n \Phi_{n}^{\ast}(z), \  n \geqslant 2,
\end{equation}
where 
\begin{align}
\R_{n} &= (ib_{2}+a_{2})\overline{\alpha}_{n}, 
\label{An-nova} \\[0.5ex]
\s_{n,n-1} &= (ib_{0}+na_{0})(1-|\alpha_{n-1}|^2), 
\label{Bn_n-1} \\[0.5ex]
\s_{n,n} &=   n a_1 - a_2 \gamma_{n} +  [ib_{2}-(n-1)a_{2}]\overline{\alpha}_{n} \alpha_{n-1}, \label{antiga50}
\end{align}
$\alpha_n$ are the Verblunsky coefficients and $\gamma_{n}$ is given in \eqref{lnn1}. The coefficient $\s_{n,n}$ can also be written as
\begin{equation} \label{antiga49}
 \s_{n,n}  =  ib_1 + (n+1)a_1 - a_0 \overline{\gamma}_{n}
- [ib_0 + (n+1)a_0] \alpha_{n} \overline{\alpha}_{n-1}.  
\end{equation}
\end{theorem}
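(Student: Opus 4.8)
The plan is to expand $A(z)\Phi_n'(z)$ in the basis provided by Magnus's result (specialized to $d=2$): since $A\Phi_n'$ is a linear combination of $\Phi_{n+1},\Phi_n,\Phi_{n-1}$ and $\Phi_{n+1}^*, z\Phi_n^*$, and since $z\Phi_n^* = \Phi_{n+1}^* + \alpha_n \Phi_{n+1}$ (the reciprocal of the Szegő recurrence \eqref{Szegorecorrence}), one can rewrite everything using $\Phi_{n+1}^*$. But $\Phi_{n+1}^* = \Phi_n^* - \alpha_n z\Phi_n$ again by \eqref{Szegorecorrence}, and $z\Phi_n = \Phi_{n+1} + \overline{\alpha}_n\Phi_n^*$, so after substitution $A\Phi_n'$ becomes a combination of $\Phi_{n+1},\Phi_n,\Phi_{n-1},\Phi_n^*$ as claimed in \eqref{Est-geral2-nova}. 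This establishes the \emph{form} of the relation; the work is to pin down the four coefficients $na_2$, $\s_{n,n}$, $\s_{n,n-1}$, $\R_n$.

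To compute the coefficients I would take inner products of \eqref{Est-geral2-nova} against suitable powers $z^k$ and against $\Phi_{n-1},\Phi_n^*$, exploiting orthogonality. First, comparing leading coefficients (the $z^{n+1}$ term): $A\Phi_n'$ has leading term $a_2 z^{n+1}\cdot n$ — wait, $\Phi_n' = nz^{n-1}+\cdots$ so $A\Phi_n'$ has leading term $n a_2 z^{n+1}$, forcing the $\Phi_{n+1}$ coefficient to be exactly $na_2$. Next, pairing \eqref{Est-geral2-nova} with $\Phi_n^*$: since $\langle \Phi_{n+1},\Phi_n^*\rangle$, $\langle\Phi_{n-1},\Phi_n^*\rangle$ vanish by degree/orthogonality considerations on the span $\{1,z,\dots,z^n\}$ and $\langle\Phi_n,\Phi_n^*\rangle=0$ too while $\langle\Phi_n^*,\Phi_n^*\rangle=\|\Phi_n\|^2$, one isolates $\R_n$; evaluating $\langle A\Phi_n',\Phi_n^*\rangle$ via integration by parts (using \eqref{Aphilinhazk} with $k=n$ and $A(1)[w(2\pi)-w(0)]=0$) gives $\langle\Phi_n[iB+(n+1)A],z^{n+1}\rangle$, and only the top-degree pieces of $iB+(n+1)A$, namely $(ib_2+(n+1)a_2)z^2$, contribute against $z^{n+1}$ after accounting for $\Phi_n$'s expansion — producing the coefficient $(ib_2+a_2)\overline{\alpha}_n$ after using $\Phi_n(0)=-\overline{\alpha}_{n-1}$ and the relation $\Phi_{n+1}(0)=-\overline{\alpha}_n$; care is needed because the $z^{n+1}$-coefficient of $\Phi_n^*$ is $\overline{\Phi_n(0)}=-\alpha_{n-1}$, but the orthogonality trick lets one instead compare $z^0$-coefficients. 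Similarly, pairing with $\Phi_{n-1}$ and using \eqref{orthogonality-A} to kill $\langle A\Phi_n',z^k\rangle$ for $1\le k\le n-2$ isolates $\s_{n,n-1}$ via the $z^0$ (constant-term) contribution, using \eqref{Aphilinhazk} with $k=0$: $\langle A\Phi_n',1\rangle = \langle\Phi_n[iB+A],z\rangle$, whose value is governed by $b_0$ and $a_0$ and the coefficient $\beta_n$ of $z$ in $\Phi_n$ together with $\langle\Phi_n,\Phi_n\rangle$ — yielding $(ib_0+na_0)(1-|\alpha_{n-1}|^2)$ after using $\kappa_n^2/\kappa_{n+1}^2 = 1-|\alpha_n|^2$ so $\|\Phi_{n-1}\|^2/\|\Phi_n\|^2$ type ratios collapse; actually one pairs against $\Phi_{n-1}$ directly and reads off from $\langle A\Phi_n',\Phi_{n-1}\rangle = \langle\Phi_n[iB+A],z\Phi_{n-1}\rangle$ since $z\Phi_{n-1}$ contributes its leading and constant terms. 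Finally $\s_{n,n}$ is obtained either by comparing the $z^n$-coefficients on both sides of \eqref{Est-geral2-nova} — the left side gives $n a_1 + a_2\cdot(\text{coeff of }z^n\text{ in }\Phi_n') = na_1 + a_2(n-1)\gamma_n$... one must be careful: $\Phi_n'$ has $z^n$-coefficient zero and $z^{n-1}$-coefficient... let me instead say by comparing coefficients of $z^n$ directly, using \eqref{pol_expli} for $\Phi_n$ and $\Phi_{n+1}$, $\Phi_n^*$, and then substituting \eqref{lnn1} — producing \eqref{antiga50}; the alternative form \eqref{antiga49} follows by pairing against $z^n$ via integration by parts (i.e., \eqref{Aphilinhazk} with $k=n-1$ combined with $k=n$ information) or by algebraically re-expressing \eqref{antiga50} using \eqref{lnn1}, \eqref{rnn1}, and $\gamma_{n+1}=\gamma_n+\overline{\alpha}_n\alpha_{n-1}$.

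The main obstacle I anticipate is bookkeeping in the coefficient extraction: the two natural handles — comparing explicit polynomial coefficients from \eqref{pol_expli} versus using the integration-by-parts identity \eqref{Aphilinhazk} — each give partial information, and reconciling them requires repeated substitution of \eqref{lnn1} and \eqref{rnn1} and the identity $\gamma_n=\gamma_{n-1}+\overline{\alpha}_{n-1}\alpha_{n-2}$, plus the norm ratio $\|\Phi_{n-1}\|^2=(1-|\alpha_{n-1}|^2)^{-1}\|\Phi_n\|^2$, to bring the answers into the stated closed forms. Verifying that the two expressions \eqref{antiga50} and \eqref{antiga49} for $\s_{n,n}$ agree is a purely algebraic identity in $\{\alpha_j\}$ that I would check by eliminating $\gamma_n$ and $\beta_n$ in favor of the Verblunsky coefficients. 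One subtlety worth flagging: the derivations of \eqref{An-nova}--\eqref{antiga49} implicitly need $n\ge 2$ so that the orthogonality range in \eqref{orthogonality-A} is nonempty and so that $\Phi_{n-1}$ genuinely sits below the relevant span; the hypothesis $n\ge 2$ in the statement is exactly what makes the argument go through.
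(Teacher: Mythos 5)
Your overall architecture matches the paper's: write $A\Phi_n'$ in the basis $\{\Phi_{n+1},\Phi_n,\Phi_{n-1},\Phi_n^{\ast}\}$ (the $\Phi_{n+1}$-coefficient $na_2$ coming from the leading term), then extract the remaining coefficients by pairing with test functions and invoking \eqref{Aphilinhazk} and \eqref{orthogonality-A}. However, two of the concrete orthogonality claims you rely on are false, and they sit exactly where the work is. Since $\langle\Phi_n^{\ast},z^{k}\rangle=0$ only for $k=1,\dots,n$ while $\langle\Phi_n^{\ast},1\rangle=\langle\Phi_n,\Phi_n\rangle$, one has $\langle\Phi_n^{\ast},\Phi_n\rangle=-\alpha_{n-1}\langle\Phi_n,\Phi_n\rangle$ and $\langle\Phi_n^{\ast},\Phi_{n-1}\rangle=-\alpha_{n-2}\langle\Phi_n,\Phi_n\rangle$, neither of which vanishes in general. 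Hence pairing \eqref{Est-geral2-nova} with $\Phi_n^{\ast}$ does \emph{not} isolate $\R_n$ (you get an equation mixing $\R_n$, $\s_{n,n}$ and $\s_{n,n-1}$), and pairing with $\Phi_{n-1}$ does not isolate $\s_{n,n-1}$ (it picks up the term $\R_n\langle\Phi_n^{\ast},\Phi_{n-1}\rangle$ you discard). The clean test functions are monomials: pairing with $z^{0}=1$ annihilates $\Phi_{n+1},\Phi_n,\Phi_{n-1}$ and leaves $\R_n\langle\Phi_n,\Phi_n\rangle=\langle A\Phi_n',1\rangle=(ib_2+a_2)\langle z\Phi_n,1\rangle=(ib_2+a_2)\overline{\alpha}_n\langle\Phi_n,\Phi_n\rangle$, giving \eqref{An-nova}; pairing with $z^{n-1}$ (where $\langle\Phi_n^{\ast},z^{n-1}\rangle=0$ because $1\leqslant n-1\leqslant n$ for $n\geqslant 2$) leaves $\s_{n,n-1}\langle\Phi_{n-1},\Phi_{n-1}\rangle=\langle A\Phi_n',z^{n-1}\rangle=(ib_0+na_0)\langle\Phi_n,\Phi_n\rangle$, giving \eqref{Bn_n-1}. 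That is what the paper does, and it is the step your sketch would not survive as written.

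A second, smaller error: the identity you quote to justify the form of the relation, $z\Phi_n^{\ast}=\Phi_{n+1}^{\ast}+\alpha_n\Phi_{n+1}$, is wrong (compare leading coefficients: the left side has $-\alpha_{n-1}$, the right side $0$); the correct conversion is $\Phi_{n+1}^{\ast}+\alpha_n\Phi_{n+1}=(1-|\alpha_n|^{2})\Phi_n^{\ast}$. This is repairable and in any case avoidable: the paper simply observes that $A\Phi_n'-na_2\Phi_{n+1}-\R_n\Phi_n^{\ast}$ has degree at most $n$, expands it as $\sum_{j=0}^{n}\s_{n,j}\Phi_j$, chooses $\R_n$ so that $\s_{n,0}=0$, and kills $\s_{n,1},\dots,\s_{n,n-2}$ via \eqref{orthogonality-A}. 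Your method for $\s_{n,n}$ — comparing $z^{n}$-coefficients using \eqref{pol_expli} together with $\gamma_{n+1}=\gamma_n+\overline{\alpha}_n\alpha_{n-1}$ and the value of $\R_n$ — does recover \eqref{antiga50} correctly, and \eqref{antiga49} does follow from \eqref{Aphilinhazk} with $k=n$ combined with \eqref{fato1} and \eqref{Bn_n-1}; so the second half of your plan is sound once the coefficient-extraction step is fixed.
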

\begin{proof}
Observe that $A(z)\Phi_{n}'(z)- n a_2\Phi_{n+1}(z)-\R_{n}\Phi_{n}^{\ast}(z)$ is polynomial of degree at most $n$. Hence, one can write 
\begin{equation*}
A(z)\Phi_{n}'(z)-na_2\Phi_{n+1}(z)-\R_{n}\Phi_{n}^{\ast}(z) =\sum_{j=0}^n \s_{n,j} \Phi_{j}(z).
\end{equation*}

First, we consider
$\langle A\Phi_{n}', 1 \rangle -n a_2 \langle \Phi_{n+1}, 1 \rangle - \R_{n}\langle \Phi_{n}^{\ast}, 1 \rangle =
\sum_{j=0}^n \s_{n,j} \langle \Phi_{j}, 1 \rangle,$
and using the orthogonality properties, we obtain
\begin{equation*} 
 \langle A \Phi_{n}', 1 \rangle - \R_{n}\langle \Phi_{n}, \Phi_{n} \rangle = \s_{n,0} \langle \Phi_{0}, 1 \rangle.
\end{equation*} 
Setting
$   \R_{n} = \langle A\Phi_{n}', 1 \rangle/\langle \Phi_{n}, \Phi_{n} \rangle,$ we get  $\s_{n,0}=0$. 
From \eqref{Aphilinhazk} with $k=0$,
 \begin{align*}
\langle A\Phi_{n}', 1 \rangle
& = \langle \Phi_{n} [(ib_{2}+a_{2}){z^2}+(ib_{1}+a_{1}){z}+(ib_{0}+a_{0})], z \rangle \\
& = (ib_{2}+a_{2}) \langle z \Phi_{n}, 1  \rangle.
\end{align*}
From \eqref{Szegorecorrence} it follows that $\langle z \Phi_{n}, 1  \rangle =
\overline{\alpha}_{n} \langle\Phi_{n},\Phi_{n}\rangle$, 
and thus \eqref{An-nova} holds.

By successively doing $k=1,2,\ldots, n-2$, we have
\begin{equation*}
\langle A\Phi_{n}', z^k \rangle -n a_2 \langle \Phi_{n+1}, z^k \rangle - \R_{n}\langle \Phi_{n}^{\ast}, z^k \rangle =
\sum_{j=1}^n \s_{n,j} \langle \Phi_{j}, z^k \rangle
\end{equation*}
and, from the orthogonality properties and \eqref{orthogonality-A}, $\s_{n,k}=0,$ for $k=1,2,\ldots, n-2$. Hence,   
\begin{equation*}
A(z)\Phi_{n}'(z)-n a_2\Phi_{n+1}(z)-\R_{n}\Phi_{n}^{\ast}(z) = \s_{n,n-1} \Phi_{n-1}(z) + \s_{n,n} \Phi_{n}(z),
\end{equation*}
and \eqref{Est-geral2-nova} holds.

To find the explicit value of $\s_{n,n-1} $, we take the inner product of both sides of equation \eqref{Est-geral2-nova} with  $z^{n-1}$, and we obtain
$\langle A\Phi'_{n},z^{n-1} \rangle 
=  \s_{n,n-1} \langle \Phi_{n-1},z^{n-1} \rangle.$
From \eqref{Aphilinhazk} with $k=n-1$, we know that
\begin{align*}
\langle A\Phi'_{n},z^{n-1} \rangle  
& = \langle \Phi_{n}[(ib_{2}+na_{2}){z^2}+(ib_{1}+na_{1}){z}+(ib_{0}+na_{0})],z^{n} \rangle, \\[0.5ex]
& = (ib_{0}+na_{0}) \langle \Phi_{n},\Phi_{n}  \rangle.
\end{align*}
Hence, we proved \eqref{Bn_n-1}, i.e.,
$\s_{n,n-1}  = (ib_{0}+na_{0})\kappa_{n-1}^{2}/\kappa_{n}^{2} =(ib_{0}+na_{0})[1-|\alpha_{n-1}|^2].$

For the explicit value of $\s_{n,n}$, first we take the inner product of both sides of equation  \eqref{Est-geral2-nova} with $\Phi_{n}$, to find
\begin{equation} \label{aux_antiga50}
\langle A\Phi_{n}', \Phi_{n} \rangle = \s_{n,n} \langle \Phi_{n}, \Phi_{n} \rangle + \R_{n} \langle \Phi_{n}^{\ast}, \Phi_{n} \rangle.
\end{equation}
On the other hand, from the orthogonality properties we know that
\begin{equation*}
\langle A\Phi'_{n},\Phi_{n} \rangle 
 = n a_{2} \langle z^{n+1},\Phi_{n} \rangle + [\gamma_{n} (n-1) a_{2} + n a_{1}  ] \langle \Phi_{n},\Phi_{n}\rangle,
\end{equation*}
and from \eqref{fato1} we get
$\langle A\Phi'_{n},\Phi_{n} \rangle 
 = - \gamma_{n+1} n a_{2} \langle \Phi_n,\Phi_{n} \rangle + [\gamma_{n} (n-1) a_{2} + n a_{1}  ] \langle \Phi_{n},\Phi_{n}\rangle.$
Substituting these relations in \eqref{aux_antiga50}, we obtain
\begin{equation*}
\s_{n,n} = - \gamma_{n+1} n a_{2}  + \gamma_{n} (n-1) a_{2} + n a_{1}  + \R_n \alpha_{n-1},
\end{equation*}
using \eqref{lnn1} and \eqref{An-nova}, the equation \eqref{antiga50} holds.

Finally,  we use \eqref{Est-geral2-nova} to get
$\langle A\Phi'_{n},z^{n} \rangle  = \s_{n,n-1} \langle \Phi_{n-1},z^{n} \rangle + \s_{n,n} \langle \Phi_{n},z^{n}\rangle.$ Moreover, 
since $ \langle \Phi_{n}, z^{n} \rangle = \langle \Phi_{n},\Phi_{n} \rangle$ and \eqref{fato1}, it follows that
\begin{equation} \label{aux_antiga49}
\langle A\Phi'_{n},z^{n} \rangle  = - \s_{n,n-1} \overline{\gamma}_{n} \langle \Phi_{n-1},\Phi_{n-1} \rangle + \s_{n,n} \langle \Phi_{n},\Phi_{n} \rangle.
\end{equation}
From \eqref{Aphilinhazk} with $k=n$, we obtain
 \begin{align*}
\langle A\Phi_{n}', z^n \rangle
& = \langle \Phi_{n} [(ib_{2}+(n+1)a_{2}){z^2}+(ib_{1}+(n+1)a_{1}){z}+(ib_{0}+(n+1)a_{0})], z^{n+1} \rangle \\
& = (ib_{1}+(n+1)a_{1})\langle \Phi_{n},\Phi_{n} \rangle +(ib_{0}+(n+1)a_{0})\langle \Phi_{n},z^{n+1} \rangle 
\end{align*}
and using \eqref{fato1},
$ \langle A\Phi_{n}', z^n \rangle = 
(ib_{1}+(n+1)a_{1})\langle \Phi_{n},\Phi_{n} \rangle - (ib_{0}+(n+1)a_{0}) \overline{\gamma}_{n+1} \langle \Phi_{n},\Phi_{n} \rangle. $
Substituting the latter relation in \eqref{aux_antiga49}, we get
\begin{equation*}
\s_{n,n} = \s_{n,n-1} \overline{\gamma}_{n} \frac{\langle \Phi_{n-1},\Phi_{n-1} \rangle}{\langle \Phi_{n},\Phi_{n} \rangle}+ (ib_{1}+(n+1)a_{1}) - (ib_{0}+(n+1)a_{0}) \overline{\gamma}_{n+1}. 
\end{equation*}
Thus, with $\s_{n,n-1}$ given in \eqref{Bn_n-1}, the fact that $\kappa_{n-1}^{2}/\kappa_{n}^{2} =  1-|\alpha_{n-1}|^2$, and relation \eqref{lnn1} for $\gamma_{n+1}$, we get \eqref{antiga49}. 
\end{proof}

Since the reciprocal polynomials satisfy 
$\Phi_{n+1}^{*}(z) = \Phi_{n}^{*}(z) - \alpha_{n} z \Phi_{n}(z), $
then
$\Phi_{n+1}^{*}(z) + \alpha_{n} \Phi_{n+1}(z) = (1-|\alpha_{n} |^2) \Phi_{n}^{*}(z)$. Hence, we obtain a 
structure relation for the associated MOPUC, 
involving the polynomials $\Phi_{n+1}$, $\Phi_{n}$, $\Phi_{n-1}$ and $\Phi_{n+1}^{*}$.  
In the next corollary we recover  the result of Magnus \cite{Ma00} for $d=2$ and provide explicit formulas for the coefficients.
 
\begin{corollary}  
Under the hypothesis of Theorem \ref{prop_rel_est},
the associated MOPUC  satisfy 
\begin{align*} 
A(z)\Phi_{n}'(z) = & \
\s_{n,n}\Phi_{n}(z)
+ \s_{n,n-1}\Phi_{n-1}(z)
+ \frac{n a_2 + [ib_{2}-(n-1)a_{2}] |\alpha_n|^2}{1-|\alpha_n|^2}\Phi_{n+1}(z) \\
& + \frac{(ib_{2}+a_{2})\overline{\alpha}_{n}}{1-|\alpha_n|^2} \Phi_{n+1}^{\ast}(z), \quad  n \geqslant 2,
\end{align*}
with coefficients $\s_{n,n}$ and $\s_{n,n-1}$ given as in Theorem \ref{prop_rel_est}. 
\end{corollary}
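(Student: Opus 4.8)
The plan is to obtain the corollary as a purely algebraic rearrangement of the structure relation \eqref{Est-geral2-nova}, eliminating $\Phi_n^{*}$ in favour of $\Phi_{n+1}^{*}$ by means of the identity for reciprocal polynomials recalled immediately before the statement. First I would write down that identity explicitly: from $\Phi_{n+1}^{*}(z)=\Phi_{n}^{*}(z)-\alpha_{n}z\Phi_{n}(z)$ combined with the Szegő recurrence \eqref{Szegorecorrence} one gets $\Phi_{n+1}^{*}(z)+\alpha_{n}\Phi_{n+1}(z)=(1-|\alpha_n|^2)\Phi_n^{*}(z)$, and since $|\alpha_n|<1$ for a positive measure this can be solved for $\Phi_n^{*}$:
\begin{equation*}
\Phi_n^{*}(z)=\frac{1}{1-|\alpha_n|^2}\bigl(\Phi_{n+1}^{*}(z)+\alpha_{n}\Phi_{n+1}(z)\bigr).
\end{equation*}

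Next I would substitute this expression for $\Phi_n^{*}$ into the structure relation \eqref{Est-geral2-nova} of Theorem \ref{prop_rel_est} and collect terms. The terms in $\Phi_n$ and $\Phi_{n-1}$ are untouched, so their coefficients remain $\s_{n,n}$ and $\s_{n,n-1}$ exactly as stated there. The coefficient of $\Phi_{n+1}^{*}$ becomes $\R_n/(1-|\alpha_n|^2)$, which equals $(ib_2+a_2)\overline{\alpha}_n/(1-|\alpha_n|^2)$ by \eqref{An-nova}. The coefficient of $\Phi_{n+1}$ becomes $na_2+\R_n\alpha_n/(1-|\alpha_n|^2)$; using $\R_n\alpha_n=(ib_2+a_2)|\alpha_n|^2$ and placing everything over the common denominator $1-|\alpha_n|^2$, a one-line simplification via the identity $na_2(1-|\alpha_n|^2)+(ib_2+a_2)|\alpha_n|^2=na_2+[ib_2-(n-1)a_2]|\alpha_n|^2$ produces precisely the stated coefficient $\bigl(na_2+[ib_2-(n-1)a_2]|\alpha_n|^2\bigr)/(1-|\alpha_n|^2)$.

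There is essentially no obstacle: the corollary is a reformulation of Theorem \ref{prop_rel_est}, valid in the same range $n\geqslant 2$, and the division by $1-|\alpha_n|^2$ is legitimate because the Verblunsky coefficients of a positive measure satisfy $|\alpha_n|<1$. The only step needing a moment's care is the short algebraic simplification of the $\Phi_{n+1}$-coefficient, which is immediate on expanding.
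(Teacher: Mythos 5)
Your proposal is correct and is exactly the route the paper intends: the identity $\Phi_{n+1}^{*}(z)+\alpha_{n}\Phi_{n+1}(z)=(1-|\alpha_n|^2)\Phi_n^{*}(z)$ stated just before the corollary is substituted into \eqref{Est-geral2-nova}, and the coefficient of $\Phi_{n+1}$ simplifies as you indicate via $na_2(1-|\alpha_n|^2)+(ib_2+a_2)|\alpha_n|^2=na_2+[ib_2-(n-1)a_2]|\alpha_n|^2$. Your remark that $|\alpha_n|<1$ justifies the division is a worthwhile explicit addition that the paper leaves implicit.
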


Replacing $\s_{n,n}$ given by \eqref{antiga50} in the  structure relation \eqref{Est-geral2-nova}, we get 
\begin{align*}
A(z)\Phi_{n}'(z) =
& \ n a_2 \Phi_{n+1}(z) + [n a_1 - a_2 \gamma_{n} -na_{2}\overline{\alpha}_{n} \alpha_{n-1} ] \Phi_{n}(z) 
+ \s_{n,n-1}\Phi_{n-1}(z) \\[0.5ex]
& + (ib_2 +a_2) \overline{\alpha}_{n} \, [ \Phi_{n}^{\ast}(z) + \alpha_{n-1}  \Phi_{n}(z)].
\end{align*}
In the next result we provide a structure relation for the  MOPUC, involving the reciprocal polynomial $\Phi_{n-1}^{\ast}(z)$.

\begin{corollary} \label{coro2}
Under the hypothesis of Theorem \ref{prop_rel_est},
the associated  MOPUC satisfy 
\begin{align*}
A(z)\Phi_{n}'(z) = & \  n a_2 \Phi_{n+1}(z)+
[n a_1 - a_2 \gamma_{n}  -na_{2}\overline{\alpha}_{n} \alpha_{n-1} ] \Phi_{n}(z) 
+ [(ib_{0}+na_{0})(1-|\alpha_{n-1}|^2)]\Phi_{n-1}(z)
\nonumber \\[0.5ex]
& + (ib_2 +a_2) \overline{\alpha}_{n} (1-|\alpha_{n-1}|^2)
\Phi_{n-1}^{\ast}(z),\quad  n \geqslant 2.
\end{align*}
\end{corollary}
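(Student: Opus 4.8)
The plan is to start from the intermediate structure relation displayed immediately before the statement of Corollary~\ref{coro2}, i.e.\ the one obtained by substituting \eqref{antiga50} into \eqref{Est-geral2-nova}, in which $A(z)\Phi_{n}'(z)$ is already written as a linear combination of $\Phi_{n+1}$, $\Phi_{n}$, $\Phi_{n-1}$ and the bracket $\Phi_{n}^{\ast}(z)+\alpha_{n-1}\Phi_{n}(z)$, with coefficients $n a_2$, $n a_1 - a_2\gamma_n - n a_2\overline{\alpha}_n\alpha_{n-1}$, $\s_{n,n-1}$ and $(ib_2+a_2)\overline{\alpha}_n$ respectively. The only remaining task is to re-express that last bracket in terms of $\Phi_{n-1}^{\ast}$, and then to insert the explicit value of $\s_{n,n-1}$.

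First I would invoke the reciprocal-polynomial identity already recorded just above the corollary, namely $\Phi_{n+1}^{\ast}(z)+\alpha_{n}\Phi_{n+1}(z) = (1-|\alpha_{n}|^2)\Phi_{n}^{\ast}(z)$, and apply it with the index $n$ replaced by $n-1$, which gives
\begin{equation*}
\Phi_{n}^{\ast}(z)+\alpha_{n-1}\Phi_{n}(z) = (1-|\alpha_{n-1}|^2)\,\Phi_{n-1}^{\ast}(z), \qquad n\geqslant 1.
\end{equation*}
(Equivalently, one combines $\Phi_{n}^{\ast}(z) = \Phi_{n-1}^{\ast}(z) - \alpha_{n-1} z\Phi_{n-1}(z)$ with the Szegő recurrence \eqref{Szegorecorrence}; the two $z\Phi_{n-1}$ contributions cancel, leaving the factor $1-|\alpha_{n-1}|^2$ in front of $\Phi_{n-1}^{\ast}$.) Substituting this into the bracket, the term $(ib_2+a_2)\overline{\alpha}_{n}[\Phi_{n}^{\ast}(z)+\alpha_{n-1}\Phi_{n}(z)]$ turns into $(ib_2+a_2)\overline{\alpha}_{n}(1-|\alpha_{n-1}|^2)\Phi_{n-1}^{\ast}(z)$, which is precisely the last term appearing in the asserted relation.

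Finally I would replace $\s_{n,n-1}$ by its explicit value $(ib_0+na_0)(1-|\alpha_{n-1}|^2)$ from \eqref{Bn_n-1}, while leaving the coefficients of $\Phi_{n+1}$ and $\Phi_{n}$ exactly as they stand in the intermediate relation. Collecting the four terms yields the stated identity, valid for $n\geqslant 2$ since Theorem~\ref{prop_rel_est} (and hence the intermediate relation) holds for $n\geqslant 2$.

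There is no genuine obstacle here: the corollary is a purely algebraic rewriting of Theorem~\ref{prop_rel_est}, and the only ingredient beyond it is the elementary index shift of the reciprocal-polynomial identity, whose verification is a one-line cancellation using \eqref{Szegorecorrence}. The only care required is bookkeeping — keeping track of which coefficients are transcribed unchanged from the intermediate relation and substituting $\s_{n,n-1}$ correctly.
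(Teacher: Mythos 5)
Your proposal is correct and follows exactly the route the paper intends: it starts from the intermediate relation obtained by substituting \eqref{antiga50} into \eqref{Est-geral2-nova}, applies the identity $\Phi_{n}^{\ast}(z)+\alpha_{n-1}\Phi_{n}(z)=(1-|\alpha_{n-1}|^2)\Phi_{n-1}^{\ast}(z)$ (the paper's displayed reciprocal identity with $n$ shifted to $n-1$), and inserts the explicit value of $\s_{n,n-1}$ from \eqref{Bn_n-1}. Nothing further is needed.
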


Using the results of Theorem \ref{prop_rel_est} we show in the following result that the Verblunsky coefficients satisfy some non-linear difference equations.

\begin{theorem} \label{TEO_eq_de_dif_caso_geral}
If the weight function satisfies \eqref{Eq-Tipo-Pearson-2} with $A(z)$ and $B(z)$ polynomials of degree at most 2 as in Theorem \ref{prop_rel_est}, then the Verblunsky coefficients satisfy the non-linear difference equations
\begin{equation} \label{eq_de_dif_caso_geral}
[(n-1)\overline{a}_2+i\overline{b}_2] \alpha_n 
+ [(n-1)\overline{a}_0-i\overline{b}_0]\alpha_{n-2}
= -(n \overline{a}_1 - \gamma_{n}\overline{a}_0 - \overline{\gamma}_{n} \overline{a}_2) \dfrac{\alpha_{n-1}}{1-|\alpha_{n-1}|^2}, \ n \geqslant 2,
\end{equation}
and
\begin{eqnarray} \label{eq_de_dif_caso_geral_2}
&& 
[(n-1)\overline{a}_2+i\overline{b}_2]\dfrac{\alpha_n}{1-|\alpha_{n-1}|^2}  
+ [(n-1) \overline{a}_0 - i \overline{b}_0]\alpha_{n-2} \\
&& \qquad  = - \big\{[i \overline{b}_0-(n+1) \overline{a}_0]\alpha_{n-1} \overline{\alpha}_{n} -i \overline{b}_1+(n+1) \overline{a}_1 - 2  \gamma_{n} \overline{a}_0  \big\} \dfrac{ \alpha_{n-1}}{1-|\alpha_{n-1}|^2}, \ n \geqslant 2, \nonumber
\end{eqnarray}
where $\alpha_n$ are the Verblunsky coefficients and $\gamma_{n}$ is given in \eqref{lnn1}.
\end{theorem}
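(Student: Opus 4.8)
The plan is to obtain both non-linear difference equations by specializing the structure relation \eqref{Est-geral2-nova} of Theorem~\ref{prop_rel_est} at the single point $z=0$, and then reading the resulting identity through the two distinct closed forms \eqref{antiga50} and \eqref{antiga49} for the coefficient $\s_{n,n}$.

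Setting $z=0$ in \eqref{Est-geral2-nova} and using $\Phi_n(0)=-\overline{\alpha}_{n-1}$, $\Phi_{n+1}(0)=-\overline{\alpha}_{n}$, $\Phi_{n-1}(0)=-\overline{\alpha}_{n-2}$, $\Phi_{n}^{*}(0)=1$, $\Phi_{n}'(0)=\beta_{n}$ and $A(0)=a_{0}$, together with the explicit values $\R_{n}=(ib_{2}+a_{2})\overline{\alpha}_{n}$ and $\s_{n,n-1}=(ib_{0}+na_{0})(1-|\alpha_{n-1}|^{2})$ from Theorem~\ref{prop_rel_est}, one gets a linear relation among $\overline{\alpha}_{n},\overline{\alpha}_{n-1},\overline{\alpha}_{n-2},\beta_{n}$ and $\s_{n,n}$. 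Replacing $\beta_{n}$ by the second expression in \eqref{rnn1}, namely $\beta_{n}=-[(1-|\alpha_{n-1}|^{2})\overline{\alpha}_{n-2}+\overline{\alpha}_{n-1}\overline{\gamma}_{n}]$, and collecting the $\overline{\alpha}_{n-2}$ terms, I expect to arrive at the intermediate identity
\[
[ib_{0}+(n-1)a_{0}](1-|\alpha_{n-1}|^{2})\overline{\alpha}_{n-2}-a_{0}\overline{\alpha}_{n-1}\overline{\gamma}_{n}
=[ib_{2}-(n-1)a_{2}]\overline{\alpha}_{n}-\s_{n,n}\overline{\alpha}_{n-1},\qquad n\geqslant 2,
\]
valid on the same range as \eqref{Est-geral2-nova}.

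To deduce \eqref{eq_de_dif_caso_geral} I would substitute $\s_{n,n}$ from \eqref{antiga50}; the product $\s_{n,n}\overline{\alpha}_{n-1}$ then produces the term $[ib_{2}-(n-1)a_{2}]|\alpha_{n-1}|^{2}\overline{\alpha}_{n}$ (since $\overline{\alpha}_{n}\alpha_{n-1}\overline{\alpha}_{n-1}=|\alpha_{n-1}|^{2}\overline{\alpha}_{n}$), which merges with $[ib_{2}-(n-1)a_{2}]\overline{\alpha}_{n}$ to produce the factor $1-|\alpha_{n-1}|^{2}$; dividing the identity by $1-|\alpha_{n-1}|^{2}$, isolating the $\overline{\alpha}_{n-1}$ terms on one side, and finally taking complex conjugates gives \eqref{eq_de_dif_caso_geral}. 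To deduce \eqref{eq_de_dif_caso_geral_2} I would instead substitute $\s_{n,n}$ from \eqref{antiga49}; now $\s_{n,n}\overline{\alpha}_{n-1}$ contributes $[ib_{1}+(n+1)a_{1}-a_{0}\overline{\gamma}_{n}]\overline{\alpha}_{n-1}-[ib_{0}+(n+1)a_{0}]\alpha_{n}\overline{\alpha}_{n-1}^{2}$, the crucial point being that $\alpha_{n}\overline{\alpha}_{n-1}\cdot\overline{\alpha}_{n-1}=\alpha_{n}\overline{\alpha}_{n-1}^{2}$ and \emph{not} $\alpha_{n}|\alpha_{n-1}|^{2}$; the first summand combines with the $-a_{0}\overline{\alpha}_{n-1}\overline{\gamma}_{n}$ already present into the coefficient $ib_{1}+(n+1)a_{1}-2a_{0}\overline{\gamma}_{n}$. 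Dividing by $1-|\alpha_{n-1}|^{2}$, rearranging so that the terms in $\alpha_{n}/(1-|\alpha_{n-1}|^{2})$ and $\alpha_{n-2}$ sit on the left, and conjugating (so that $\alpha_{n}\overline{\alpha}_{n-1}^{2}$ turns into $\overline{\alpha}_{n}\alpha_{n-1}^{2}$, which is exactly the $\alpha_{n-1}\overline{\alpha}_{n}\cdot\alpha_{n-1}$ contribution on the right of \eqref{eq_de_dif_caso_geral_2}) yields \eqref{eq_de_dif_caso_geral_2}.

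Everything here is elementary; no genuine obstacle is expected, since the substantial work was already carried out in Theorem~\ref{prop_rel_est} and the two difference equations are nothing but the constant coefficient of the structure relation, expressed through its two forms of $\s_{n,n}$. The only points that require attention are the bookkeeping of the $\pm i$ signs when conjugating the coefficients $a_{j},b_{j}$, and not confusing $\overline{\alpha}_{n-1}^{2}$ with $|\alpha_{n-1}|^{2}$ in the step leading to \eqref{eq_de_dif_caso_geral_2}.
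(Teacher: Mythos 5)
Your proposal is correct and follows essentially the same route as the paper: evaluate the structure relation \eqref{Est-geral2-nova} at $z=0$, replace $\Phi_n'(0)$ by the expression in \eqref{rnn1}, and then insert the two closed forms \eqref{antiga50} and \eqref{antiga49} of $\s_{n,n}$ to obtain \eqref{eq_de_dif_caso_geral} and \eqref{eq_de_dif_caso_geral_2}, respectively, after conjugation. Your intermediate identity and the bookkeeping of $|\alpha_{n-1}|^2$ versus $\overline{\alpha}_{n-1}^2$ both check out.
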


\begin{proof}
Substituting $z=0$ in \eqref{Est-geral2-nova}, we get
\begin{equation} \label{com_eq_50}
a_0 \Phi_{n}'(0) = 
- \s_{n,n-1} \overline{\alpha}_{n-2}
- \s_{n,n} \overline{\alpha}_{n-1}
+ [ib_2 -(n-1)a_2] \overline{\alpha}_{n}.
\end{equation}
From \eqref{rnn1}, we know that
$ \Phi_{n}'(0)  
= - (1-|\alpha_{n-1}|^2)\overline{\alpha}_{n-2} - \overline{\alpha}_{n-1} \overline{\gamma}_{n}. $
Using this information, \eqref{Bn_n-1} and \eqref{antiga50} in equation \eqref{com_eq_50}, we have the following
\begin{align*}
- a_0 [(1-|\alpha_{n-1}|^2)\overline{\alpha}_{n-2} + \overline{\alpha}_{n-1} \overline{\gamma}_{n}]  = 
& \ [ib_2 -(n-1)a_2] \overline{\alpha}_{n}
-  (ib_{0}+na_{0})(1-|\alpha_{n-1}|^2) \overline{\alpha}_{n-2}  \\[0.5ex]
& - \{  [ib_{2}-(n-1)a_{2}]\overline{\alpha}_{n} \alpha_{n-1}  - a_2 \gamma_{n} + n a_1  \}\overline{\alpha}_{n-1},
\end{align*}
and after some simplifications we get \eqref{eq_de_dif_caso_geral}.

If we use \eqref{antiga49} instead of \eqref{antiga50}  in equation \eqref{com_eq_50}, we get the non-linear difference equation \eqref{eq_de_dif_caso_geral_2}. 
\end{proof}

\setcounter{equation}{0}
\section{Determination of semi-classical weight functions} 
\label{sec_Characterization}

The aim in this section is to determine all semi-classical weight functions, $w$, on the unit circle that satisfy \eqref{Eq-Tipo-Pearson-2}, for which  $A$ and $B$ are polynomials of degree at most 2.

Some semi-classical weight functions on the unit circle are already known, for example,  the Lebesgue weight function
$w(\theta) = 1/(2\pi)$ is semi-classical.
In Ismail's book \cite{Is05} some examples are presented. For instance, in \cite[p.~229]{Is05} and \cite{IsWi01} we find information about the circular Jacobi polynomials that are orthogonal with respect to 
\begin{align}  \label{CJP}
w(\theta) = \tau(\lambda) |1-e^{i\theta}|^{2\lambda} 
= \tilde{\tau}(\lambda) [\sin^2(\theta/2)]^\lambda,
\end{align}
where $\lambda>-1/2$, $\tau(\lambda)$ and $\tilde{\tau}(\lambda)$ are constants such that  $\mu_0 =1$. In \cite[p.~236]{Is05} and \cite{IsWi01} there is information about the modified Bessel polynomials that are orthogonal with respect to the semi-classical weight function
\begin{align}  \label{MBF}
w(\theta) = \frac{1}{2\pi I_0(t)} e^{t \cos(\theta)}, 
\end{align}
where $I_{\alpha}$ is the modified Bessel function. 

The orthogonal polynomials with respect to the semi-classical weight function
\begin{align}  \label{JOUC}
w(\theta) =   [\sin^{2}(\theta/2)]^{\lambda} \,
[\cos^{2}(\theta/2)]^{\beta} , 
\end{align}
with $\lambda > -1/2$ and $\beta > -1/2$, are called Jacobi polynomials on the unit circle, see 
 \cite{IsWi01} and \cite{Ma00}.  

In \cite{MaSR17}, in the context of coherent pairs of weight functions of the second kind 
 on the unit circle, the authors studied the following semi-classical weight functions. 
For $u \in \mathbb{C}$,  
 and $\tau(u)$ is a constant such that $\mu_0=1,$
\begin{align} \label{Teo3.1}
w(\theta)  =  \tau(u) \, e^{2|u| \sin(\theta + \arg(u))}.
\end{align}
For $u, r \in \mathbb{C},$ $ |r|\neq 1$, and $\tau(u,r)$ is a constant such that $\mu_0 =1,$
\begin{align} \label{Teo3.2}
w(\theta)  =  \tau(u, r) \, e^{2\Re(u/\overline{r}) \arg(1 -r e^{-i\theta})} |e^{i\theta}-r|^{-2\Im(u/\overline{r})}.
\end{align}
For $\lambda > -1/2$, $\eta \in \mathbb{R}$, $b = \lambda + i \eta,$  and $\tau(b)$ is a constant such that $\mu_0 =1,$
\begin{align} \label{Teo3.3}
w(\theta)  =  \tau(b) \, e^{- \eta \theta}
[\sin^{2}(\theta/2)]^{\lambda}.
\end{align}
The weight function \eqref{Teo3.3} was studied in Sri Ranga \cite{Ra10}.

Later we present the Pearson-type equation of the form \eqref{Eq-Tipo-Pearson-1} for each of these weight functions.

We continue using the notation $B(z)=b_{2}z^2+b_{1}z+b_{0}$ and since the coefficients $b_k$, $k=0,1,2$ are complex numbers, we denote $b_k=\Re(b_k)+i\Im(b_k),$
where $\Re(c)$ means real part of $c$ and $\Im(c)$ means imaginary part of   $c$.
Without loss of generality we consider $A$ a monic polynomial.  The analysis for the determination of semi-classical weight functions is divided into three cases, according to the zeros of the polynomial $A$.

\subsection{$A(z)$ is a polynomial of degree 0, $A(z)=1$}
\label{subsection_A(z)=1}

With $A(z)=1$,   from \eqref{Eq-Tipo-Pearson-3} we know that $w$ should satisfy
\begin{equation}\label{Medida-A-grau0}
\dfrac{dw(\theta)/d\theta}{w(\theta)}= b_2 z^2+ b_1 z + b_0, \quad  z=e^{i\theta}.
\end{equation}
In order to the weight function $w$ be positive in \eqref{Medida-A-grau0}, it is necessary that
$\Im(b_2 e^{2i\theta}+ b_1e^{i\theta} + b_0) =0,$
which is equivalent to
\begin{equation*}
\Im(b_2)\cos(2\theta)+\Re(b_2)\sin(2\theta) + \Re(b_1)\sin(\theta)+\Im(b_1)\cos(\theta) +\Im(b_{0})=0,
\end{equation*}
for $\theta \in [0,2\pi]$. Since the set $\{\cos(2\theta), \sin(2\theta), \cos(\theta), \sin(\theta), 1\}$ is linearly independent, then $b_{2}=0$, $b_{1}=0$ and $\Im(b_{0})=0$.
Hence, using the representation \eqref{Eq-Tipo-Pearson-4}, we get 
\begin{equation} \label{caso1-derivada-medida-4}
i\dfrac{dw(\theta)/dz}{w(\theta)}=\dfrac{\Re(b_0)}{z}, \quad  z=e^{i\theta}.
\end{equation}
Integrating \eqref{caso1-derivada-medida-4} with respect to $z$, it follows that
\begin{equation*}
\ln\left(\dfrac{w(\theta)}{w(\theta_0)}\right)=\Re(b_0)[\theta-\theta_0],
\end{equation*}
where $z_0 = e^{i \theta_0}$ with $\theta_0 \in [0,2\pi]$, such that $w(\theta_0) >0$.

Therefore, $ w(\theta)=w(\theta_0)e^{Re(b_0)[\theta-\theta_0]}. $ The additional condition to get the structure relation is $A(1)[w(2\pi) - w(0)]=0$.  We impose that $w(0)=w(2\pi)$, and hence $\Re(b_{0})=0$ and $w(\theta)=w(\theta_0)$. Since $w(\theta_0)$ is a constant, we can choose this constant such that $\int_{0}^{2\pi} w(\theta) d \theta =1$.
We conclude that the only semi-classical weight function on the unit circle with $A(z)=1$ is the Lebesgue weight function, namely, $w(\theta)= 1/(2\pi).$
Here the polynomial $B$ in the Pearson-type equation \eqref{Eq-Tipo-Pearson-1} is simply $B(z)=0$. We can observe that the Lebesgue weight function belongs to class $(0,0)$.

\subsection{$A(z)$ is a  polynomial of degree 1, $A(z)= z - r$, with $r \in \mathbb{C}$}
\label{section32}

From \eqref{Eq-Tipo-Pearson-3} with $A(z)= z-r$, $r \in \mathbb{C},$ we get
\begin{equation} \label{derivada-medida-A-grau1-caso2-a}
\frac{dw(\theta)/d\theta}{w(\theta)}= \frac{b_{2}z^2+(b_{1}-i)z+b_{0}}{z-r}, \quad  z=e^{i\theta}.
\end{equation}

In order to $w$ being positive, it is necessary  that the right hand side of  \eqref{derivada-medida-A-grau1-caso2-a} is real.
Observe that
\begin{equation*} 
\frac{dw(\theta)/d\theta}{w(\theta)} 
= \frac{-\overline{r}b_{2}z^2+[b_{2}-\overline{r}(b_{1}-i)]z-\overline{r}{b_{0}}+b_{1}-i+b_{0}z^{-1}}{|z-r|^2}
\end{equation*}
is real if and only if the numerator of the right hand side is real.
Hence, we assume that
\begin{equation*}
\Im(-\overline{r}b_{2}z^2+[b_{2}-\overline{r}(b_{1}-i)]z-\overline{r}{b_{0}}+b_{1}-i+b_{0}z^{-1})=0.
\end{equation*}
We use a process similar to the one adopted in Subsection \ref{subsection_A(z)=1}  to find the values of $b_k$, $k=0,1,2$, that satisfy the latter  assumption. Then,
we get five equations that can be written as a system of linear equations of order $5 \times 6$,
\begin{equation} \label{sys_1}
\textbf{F}_{1} \textbf{x}=\textbf{g}_{1}.
\end{equation}
The solution of this linear system is  
$\textbf{x}=(\Re(b_{2}),\Im(b_{2}),\Re(b_{1}),\Im(b_{1}),\Re(b_{0}),\Im(b_{0}))^{T},$  
the matrix $\textbf{F}_{1} $ and vector $\textbf{g}_{1}$ are
\begin{equation*}  
\textbf{F}_{1}=
\begin{pmatrix}
-\Re(r)  & -\Im(r) & 0 	& 		0	& 0      	& 0 \\
\Im(r)   & -\Re(r) & 0 	& 		0 	& 0      	& 0 \\
0        &  0      & 0 	& 		1 	& \Im(r) 	& -\Re(r)\\
1        & 0  	   & -\Re(r)& -\Im(r) & -1      & 0 \\
0        & 1       &  \Im(r)& -\Re(r) & 0       & 1
\end{pmatrix}, \quad
\textbf{g}_{1} = \begin{pmatrix}
0 \\
0 \\
1 \\
-\Im(r) \\
-\Re(r)
\end{pmatrix}.
\end{equation*}

Now we  consider separately the cases when $r=0$ and when $r \neq 0$.

\subsubsection{$A(z)= z$} 
\label{subsection321}

When $r= 0$, 
the solution of the linear system \eqref{sys_1}  yields $b_{2}=\overline{b}_0$ and $\Im(b_{1})=1$. Therefore,  using the representation \eqref{Eq-Tipo-Pearson-4} we have 
\begin{equation*}
i\dfrac{dw(\theta)/dz}{w(\theta)}=\dfrac{b_2z^2+\Re(b_1)z+\overline{b}_2}{z^2}, \quad z= e^{i\theta}.
\end{equation*}
Similarly, integrating with respect to $z$ and considering  $z_0 = e^{i \theta_0}, \theta_0 \in [0,2\pi]$, 
it follows that
\begin{equation*}
i\ln\left(\frac{w(\theta)}{w(\theta_{0})}\right) 
 = b_{2}(z-z_{0})-\overline{b}_{2} \overline{(z-z_{0})}+i\Re(b_1)(\theta-\theta_0).
\end{equation*}
Notice that
\begin{align*}
b_{2}(z-z_{0})-\overline{b}_2 \overline{(z-z_{0})} 
& = [e^{i(\theta-\theta_0)/2}-e^{-i(\theta-\theta_0)/2}][b_2e^{i(\theta+\theta_0)/2}+\overline{b}_2 e^{-i(\theta+\theta_0)/2}] \\[0.5ex]
& = 2i |b_2|[\sin(\theta+\arg(b_2))-\sin(\theta_0+\arg(b_2))].
\end{align*}
Thus
\begin{equation*}
\ln\left(\frac{w(\theta)}{w(\theta_{0})}\right) 
 = 2|b_2|[\sin(\theta+\arg(b_2))-\sin(\theta_0+\arg(b_2))]+\Re(b_1)(\theta-\theta_0)
\end{equation*}
and 
$w(\theta)=w(\theta_0)e^{ 2|b_2|[\sin(\theta+\arg(b_2))-\sin(\theta_0+\arg(b_2))]}e^{\Re(b_1)(\theta-\theta_0)}.$

Assuming that   $A(1)[w(2\pi) - w(0)]=0$, we have $\Re(b_{1})=0$. Hence, the weight function can be written as
\begin{equation} \label{medidaTeor31}
w(\theta)=\tau(b_{2}) \, e^{2|b_2|\sin(\theta+\arg(b_2))}.
\end{equation}
where $\tau(b_{2})=w(\theta_{0})$ $ e^{-2|b_2|[\sin(\theta_{0}+\arg(b_2)]}$.  This  weight function satisfies the Pearson-type equation \eqref{Eq-Tipo-Pearson-1} with $A(z)=z$ and $B(z)=b_{2}z^2+iz+\overline{b}_2.$  Considering $b_2 \neq 0$, we can see that the weight function \eqref{medidaTeor31} belongs to the semi-classical class $(1,2)$.

\begin{remark} \label{remark31}
The weight function \eqref{medidaTeor31} was studied in \cite{MaSR17} in the context of coherent pairs of weight functions of the second kind 
 on the unit circle, using $b_{2}=u \in \mathbb{C}$, see  \eqref{Teo3.1}. 
\end{remark}

\begin{remark} \label{remark32}
If we choose $b_{2}=it/2$ with $t>0$ and $\tau(b_{2})=1/[2\pi I_0(t)]$ in \eqref{medidaTeor31}, then  $\arg(b_{2})=\pi/2$ and the weight function is given by 
$ w(\theta)
= \tau(b_{2})  e^{t\cos(\theta)}, $
see \eqref{MBF}. This is the weight function related to the modified Bessel polynomials \cite{Is05}. 
\end{remark}

\subsubsection{$A(z)= z-r$, with $r\neq 0$}

The solution of system \eqref{sys_1} shows that $\Re(b_{2})=\Im(b_{2})=0$, i.e., $b_{2}=0$.

Now we analyse the solution of system \eqref{sys_1} when $|r| = 1$ and when $|r| \neq 1$.
\begin{description}
\item[1.] If $|r| = 1$, the solution of the system \eqref{sys_1} yields 
$b_{1} = -r\overline{b}_{0}+i $, where 
$b_{0}$  is an arbitrary complex number.
Then,   the representation \eqref{Eq-Tipo-Pearson-4} for this case becomes
\begin{align} \label{Eq-A-grau1-r-modulo1} 
i\dfrac{dw(\theta)/dz}{w(\theta)}=\dfrac{-r\overline{b}_0z+b_0}{z(z-r)}, \quad z=e^{i\theta}.
\end{align}
Again, integrating with respect to $z$ and considering  $z_0 = e^{i \theta_0}, \theta_0 \in [0,2\pi]$, it follows that
\begin{equation*} 
i\ln\left(\dfrac{w(\theta)}{w(\theta_0)}\right)  =-ib_0\overline{r}(\theta-\theta_0)+\ln\left(\dfrac{z-r}{z_0-r}\right)2\Im(b_0\overline{r})i.
\end{equation*}
Since $|r| = 1$, we denote $r=e^{i\varphi}$ and write
\begin{equation} \label{caso3.1.2b-f3}
\dfrac{z-r}{z_0-r} = e^{i(\theta-\theta_0)/2}\dfrac{\sin(\theta/2-\varphi/2)}{\sin(\theta_0/2-\varphi/2)}.
\end{equation}
Hence,
\begin{equation*}
\ln\left(\dfrac{w(\theta)}{w(\theta_0)}\right) 
= -\Re(b_{0}\overline{r})(\theta-\theta_{0}) + \ln\left[ \frac{\sin^2(\theta/2-\varphi/2)}{\sin^2(\theta_0/2-\varphi/2)}\right]^{\Im(b_0\overline{r})}.
\end{equation*}

Therefore, the weight function can be written as
\begin{equation*}
w(\theta)  = \tau(b_{0})e^{-\Re(\overline{b}_0{r})\theta}[ \sin^2(\theta/2-\varphi/2)]^{\Im(b_0\overline{r})},
\end{equation*}
with $\tau(b_{0})=w(\theta_{0})e^{\Re(\overline{b}_{0}{r})\theta_{0}}[\sin^2(\theta_{0}/2-\varphi/2)]^{-\Im(b_0\overline{r})}$. 

Imposing that  $A(1)[w(2\pi)-w(0)]=0$, we obtain two possible weight functions
\begin{enumerate}
\item[i)] If $\sin(\varphi/2)\neq 0$ (i.e., $r \neq 1$), it is necessary that $\Re(b_{0}\overline{r})=0,$ 
thus
\begin{equation*}
w(\theta)= \tau(b_{0})\left[\sin^2(\theta/2-\varphi/2)\right]^{\Im(b_0\overline{r})},  \quad \Im(b_0\overline{r}) > -1/2.
\end{equation*}
Here  $B(z)=(-r \overline{b}_{0}+i)z+b_{0}$, where $b_{0} \in \mathbb{R}$ when 
$\Re(r)=0$	 and $b_0 = ir\Im(b_{0})/\Re(r)$ when $\Re(r)\neq 0.$	

\item[ii)] If $\sin(\varphi/2)= 0$ (i.e., $r=1$), the weight function becomes
\begin{align}  \label{estaesta}
	w(\theta)= \tau(b_{0})e^{-\Re(b_0)\theta}[\sin^2(\theta/2)]^{\Im(b_0)}, \quad\Im(b_0)>-1/2.
\end{align}
In this case,  $B(z)= (-\overline{b}_{0}+i)z+b_{0}$.
\end{enumerate}

\begin{remark} \label{remark33}
 The weight function \eqref{estaesta} is the same as \eqref{Teo3.3},  studied in \cite{Ra10}, considering the parameter  $b_{0}=i\overline{b}$, where $b=\lambda + i \eta$, $\lambda > -1/2$ and $\eta \in \mathbb{R}.$ 
 This weight function satisfies the Pearson-type equation with $A(z) = z-1$ and $B(z) =  i [(b+1)z+\overline{b}]$.
\end{remark}

\item[2.] If $|r| \neq 1$, from the solution of the system \eqref{sys_1}, the coefficients 
$b_{1}$ and $b_{0}$  satisfy
\begin{equation*}
b_{1} = \begin{cases}
-\frac{\Re(b_{0})}{\Re(r)}+i, & \mbox{if } \Re(r)\neq 0, \\[0.5ex]
-\frac{\Im(b_{0})}{\Im(r)}+i, & \mbox{if } \Re(r)= 0,		\end{cases} 
\quad \mbox{and} \quad
b_{0} = \begin{cases}
\frac{\Re(b_{0})r}{\Re(r)}, & \mbox{if } \Re(r)\neq 0, \\[0.5ex]
i \Im(b_{0}), & \mbox{if } \Re(r)= 0,
\end{cases} 
\end{equation*}
where $\Re(b_{0})$ (resp.~$\Im(b_{0})$) is an arbitrary real number if $\Re(r) \neq 0$ (resp.~$\Re(r)=0$).

Hence, for $|r| \neq 1$, the representation \eqref{Eq-Tipo-Pearson-4} may be summarised as
\begin{equation*}
i\frac{dw(\theta)/dz}{w(\theta)}=\frac{s(r)}{z},
\quad \mbox{where} \quad
s(r)=\begin{cases}
-\frac{\Im(b_0)}{\Im(r)}, & \mbox{if } \Re(r)=0,\\[0.5ex]
-\frac{\Re(b_0)}{\Re(r)}, & \mbox{if } \Re(r)\neq 0.
\end{cases}
\end{equation*}
 Similar to what was done to get the equation \eqref{caso1-derivada-medida-4}, the weight function here satisfies
$w(\theta)=w(\theta_0)e^{s(r)[\theta-\theta_0]},$
and imposing that $w(0)=w(2\pi)$, we again get the Lebesgue weight function.   Here, the polynomials $A$ and $B$ in \eqref{Eq-Tipo-Pearson-1} are $A(z) = z-r$, with $|r| \neq 1$ and $B(z)=iz$.   Hence, we observe that the Lebesgue weight function also belongs to class $(1,1)$, as expected. 
\end{description}

\subsection{$A(z)$ is a polynomial of degree 2, $A(z)= (z-r_1)(z-r_2)$ with $r_1, r_2 \in \mathbb{C}$}

Taking into account that $A(z)= (z-r_1)(z-r_2),$
the expression \eqref{Eq-Tipo-Pearson-3} is equivalent to
\begin{equation*}
\frac{dw(\theta)/d\theta}{w(\theta)} = \frac{[(b_{2}-2i)z^2+[b_{1}+i(r_1+r_2)]z+b_{0}][(\overline{z}-\overline{r}_1)(\overline{z}-\overline{r}_2)]}{|z-r_1|^2|z-r_2|^2}.
\end{equation*}
Again, the weight function $w$ is positive if
\begin{equation} \label{esteseste}
 \Im{\{[(b_{2}-2i)z^2+(b_{1}+i(r_1+r_2))z+b_{0}][(\overline{z}-\overline{r}_1)(\overline{z}-\overline{r}_2)]\}}=0,
\end{equation}
for $z= e^{i\theta}$. Similar to what was done in Section \ref{section32}, to find the values of $b_k$, $k=0,1,2,$ that satisfy \eqref{esteseste} is equivalent to solve the system of linear equations of order $5 \times 6$,
\begin{align} \label{Sistema-A-grau2}
\textbf{F}_{2}\textbf{x}=\textbf{g}_{2},
\end{align}
where  $\textbf{x}=(\Im(b_{0}),\Re(b_{0}),\Im(b_{1}),\Re(b_{1}),\Im(b_{2}),\Re(b_{2}))^{T}$,
\begin{equation*}
\textbf{F}_{2}=
\begin{pmatrix}
1&0&0&0&\Re(r_1r_2)&-\Im(r_1r_2)\\
0&-1&0&0&\Im(r_1r_2)&\Re(r_1r_2)\\
\Re(r_1r_2)&-\Im(r_1r_2)&-\Re(r_1+r_2)&\Im(r_1+r_2)&1&0\\
-\Re(r_1+r_2)&\Im(r_1+r_2)&\Re(r_1r_2)+1&-\Im(r_1r_2)&-\Re(r_1+r_2)&\Im(r_1+r_2)\\
\Im(r_1+r_2)&\Re(r_1+r_2)&\Im(r_1r_2)&\Re(r_1r_2)-1&-\Im(r_1+r_2)&-\Re(r_1+r_2)
\end{pmatrix}
\end{equation*}
and
\begin{equation*}
\textbf{g}_{2}= 
\begin{pmatrix}
2\Re(r_1r_2)\\
2\Im(r_1r_2)\\
2+|r_1+r_2|^2\\
-3\Re(r_1+r_2)-|r_1|^2\Re(r_2)-|r_2|^2\Re(r_1)\\
-3\Im(r_1+r_2)-|r_1|^2\Im(r_2)-|r_2|^2\Im(r_1)
\end{pmatrix}.
\end{equation*} 

It is easier to determine the solution of the linear system \eqref{Sistema-A-grau2} if we consider cases according with the zeros $r_1$ and $r_2$.

\subsubsection{$A(z)= (z-r_1)(z-r_2)$ with $|r_1| = 1$ and $|r_2| = 1$}

We split the analysis into two cases  $r_1 = r_2=r$ or $r_1 \neq r_2$.

\begin{description}
\item[1.] For $A(z)= (z-r)^2 $, with $|r|= 1$,  the solution of the linear system \eqref{Sistema-A-grau2} is
\begin{equation*}
b_{2}= r^2\overline{b}_0+2i, \quad   \quad 
b_{1}=
\begin{cases}
 \frac{2}{\Im(r)}+\Im(b_1)i,  & \mbox{if} \  \Re(r)=0, \\[0.8ex]
 \frac{\Re(b_1)r-2i}{\Re(r)}, & \mbox{if} \ \Re(r)\neq 0,  
\end{cases} 
\end{equation*}
where $b_{0}$ is an arbitrary complex number and $\Re(b_{1})$ (resp.~$\Im(b_{1})$) is  an arbitrary real number if $\Re(r) \neq 0$ (resp.~$\Re(r)=0$).

Hence, the Pearson-type equation \eqref{Eq-Tipo-Pearson-4} becomes
\begin{equation*}
i\dfrac{dw(\theta)/dz}{w(\theta)}=\dfrac{r^2\overline{b}_0 z^2+(b_1 + 2ri)z+b_0}{z(z-r)^2}, \quad z=e^{i\theta}.
\end{equation*}
Integrating with respect to $z=e^{i\theta}$ 
and considering $z_{0}=e^{i\theta_{0}}$ with $\theta_{0} \in [0,2\pi]$, we obtain
\begin{align} \nonumber
i\ln\left(\dfrac{w(\theta)}{w(\theta_0)}\right) & =[2\Re(r^2\overline{b}_0)+(b_1 + 2ri)\overline{r}]\dfrac{r(z-z_0)}{(z-r)(z_0-r)} \\ \label{A-grau2-raizes-iguais} 
& \ \ \   +ib_0\overline{r}^2  (\theta-\theta_0) +  2i\Im(r^2\overline{b}_0)\ln\left(\dfrac{z-r}{z_0-r}\right).  
\end{align}
Observe that $(b_1 + 2ri )\overline{r} \in \mathbb{R}$ and
\begin{equation*} 
(b_1 + 2ri)\overline{r}=
\begin{cases}
\Im(b_1)\Im(r), & \mbox{ se } \Re(r)=0,\\[0.5ex]
\dfrac{\Re(b_1)-2\Im(r)}{\Re(r)}, & \mbox{ se } \Re(r)\neq 0. 
\end{cases} 
\end{equation*}

Since $|r|=1$ we denote $r=e^{i\varphi}$, with $0\leq \varphi \leq 2\pi$. Thus, from
\begin{equation*} 
\dfrac{r(z-z_0)}{(z-r)(z_0-r)} =\dfrac{\sin(\theta/2-\theta_0/2)}{2i\sin(\theta/2-\varphi/2)\sin(\theta_0/2-\varphi/2)} =\frac{\cot(\theta_0/2-\varphi/2)-\cot(\theta/2-\varphi/2)}{2i}
\end{equation*}
and \eqref{caso3.1.2b-f3}, the equation \eqref{A-grau2-raizes-iguais} becomes
\begin{align*}
\ln\left(\dfrac{w(\theta)}{w(\theta_0)}\right)
= & -\left[\Re(r^2\overline{b}_{0})+\frac{(b_1+2ri)\overline{r}}{2}\right][\cot(\theta_0/2-\varphi/2)-\cot(\theta/2-\varphi/2)]  \\
  & + \Re(\overline{b}_{0}{r}^2)(\theta-\theta_{0})
  +\ln\left(\dfrac{\sin^2(\theta/2-\varphi/2)}{\sin^2(\theta_0/2-\varphi/2)}\right)^{\Im(r^2\overline{b}_{0})}.
\end{align*}
The corresponding weight function is 
\begin{equation*}   
w(\theta)  = \tau(b_{0};b_{1})\left[\sin^2(\theta/2-\varphi/2)\right]^{2\Im(\overline{b}_0{r}^2)}e^{\Re(\overline{b}_0{r}^2)\theta} e^{[\Re(\overline{b}_0{r}^2)+\frac{(b_1+2ri)\overline{r}}{2}][\cot(\theta/2-\varphi/2)]},
\end{equation*}
where $\tau(b_{0};b_{1})$ is a constant.

Assuming that $A(1)[w(2\pi)-w(0)]=0$, we obtain two possible weight functions
\begin{enumerate}
\item[i)]  If $\sin(\varphi/2)\neq 0$, it is necessary that  $\Re(\overline{b}_0{r}^2)=0$ and  $(b_1+2ri)\overline{r}=0$, then the weight function is
\begin{equation*} 
w(\theta)=\tau(b_{0};b_{1})\left[\sin^2(\theta/2-\varphi/2)\right]^{\Im(\overline{b}_0{r}^2)},  \quad \Im(\overline{b}_0{r}^2)>-1/2.
\end{equation*}
In this case,  $B(z)=(r^2\overline{b}_{0}+2i)z^2-(2ir)z+b_{0}$,  where $b_{0} \in \mathbb{R}$ when 
$\Re(r^2)=0$ and $b_0 =  ir^2\Im(b_{0})/\Re(r^2)$ when $\Re(r^2)\neq 0.$	

\item[ii)] If  $\sin(\varphi/2)= 0$, it is necessary that $\Re(\overline{b}_0)+(b_1+2i)/2=0$, and
\begin{equation}  \label{estaesta2}
w(\theta)=\tau(b_{0};b_{1})
e^{\Re(\overline{b}_0)\theta}
\left[\sin^2(\theta/2)\right]^{\Im(\overline{b}_0)}, \quad \Im(\overline{b}_{0})>-1/2, \quad \Re(\overline{b}_0)\in\mathbb{R}.
\end{equation} 
Here,  $B(z)=(\overline{b}_{0}+2i)z^2 +[-(b_{0}+\overline{b}_{0})-2i]z+b_{0}$.
\end{enumerate}

\begin{remark} \label{remark34}
As in Remark \ref{remark33}, we conclude that the weight function \eqref{estaesta2} is the  weight function \eqref{Teo3.3} considering the parameter  $b_{0}=-i\overline{b}$,  where $b=\lambda + i \eta$, $\lambda > -1/2$ and $\eta \in \mathbb{R}.$  
In this case, the polynomials  in \eqref{Eq-Tipo-Pearson-1} are $A(z)= (z-1)^2$ and  $B(z)=i [({b}+2)z^2+(\overline{b}-b-2)z-\overline{b}].$
\end{remark}

\item[2.] For $A(z)=(z-r_1)(z-r_2)$, $|r_1|= |r_2|=1$ and $r_1 \neq r_2$, to find the solution of the linear system
\eqref{Sistema-A-grau2}, we need to divide the choices for the values of $r_1$ and $r_2$ into two cases,
$\Re(r_1+r_2)=0$ and  $\Re(r_1+r_2)\neq 0$.
When $\Re(r_1+r_2)=0$, we get more two possibilities 
 $r_2=-r_1$ or $r_2 =-\overline{r}_1$. 
For the case $r_2 =-r_1$,  we also get different solutions of the system if $\Re(r_1)^2=1$ or $\Re(r_1)^2 \neq 1$.
In summary, we have 			
\begin{align*}
& b_{2}=2i+r_1 r_2\overline{b}_{0}, \\
& b_{1}=\begin{cases}
     \Im(b_1) i, & \mbox{if } r_2=-r_1 \mbox{ and } \Re(r_1)^2=1,\\[0.5ex]
     -\frac{r_1 \Re(b_{1}) i}{\Im(r_1)}, & \mbox{if } r_2=-r_1 \mbox{ and } \Re(r_1)^2\neq  1, \\[0.5ex]
		2\Im(r_1) + \Im(b_{1}) i, & \mbox{if } r_2=-\overline{r}_1 \mbox{ and } \Re(r_1)\Im(r_1)\neq 0, \\[0.5ex]
		\frac{\Re(b_{1})(r_1+r_2)-|r_1+r_2|^2 i}{\Re(r_1+r_2)}, & \mbox{if } \Re(r_1+r_2)\neq 0,
\end{cases}
\end{align*}
where  $b_{0}$  is an arbitrary complex number.
In addition, either $\Re(b_1)$ or   $\Im(b_1)$  is an arbitrary depending on which subcase are $r_1$ and $r_2$. 
Hence, the representation \eqref{Eq-Tipo-Pearson-4} becomes
\begin{equation*} 
i\frac{d w(\theta)/d z}{w(\theta)} = \frac{r_1r_2 \overline{b}_0z^2+[b_{1}+i(r_1+r_2)]z+b_{0}}{z(z-r_1)(z-r_2)}.
\end{equation*}
Using similar integration as before, with $z_0 = e^{i\theta_0}, \theta_0 \in [0,2\pi]$, we get
\begin{equation}\label{A-grau2-r1r2-modulo1}
i \ln\left(\frac{w(\theta)}{w(\theta_{0})}\right) = ib_{0}\overline{r}_1 \overline{r}_2(\theta-\theta_{0}) + H_{1}\ln\left(\frac{z-r_1}{z_{0}-r_1}\right)+H_{2}\ln\left(\frac{z-r_2}{z_{0}-r_2}\right),
\end{equation}
where
\begin{align*}
& H_{1}=-\frac{1}{r_2-r_1}[r_1^2r_2\overline{b}_{0}+b_{1}+i(r_1+r_2)+\overline{r}_1b_{0}], \\
& H_{2}=\frac{1}{r_2-r_1}[r_2^2r_1\overline{b}_{0}+b_{1}+i(r_1+r_2)+\overline{r}_2b_{0}].
\end{align*}			
Notice that $H_{1}+H_{2}=2i\Im(\overline{b}_0r_1r_2)$ and thus we can rewrite  	\eqref{A-grau2-r1r2-modulo1} as 	
\begin{align} 
\ln\left(\frac{w(\theta)}{w(\theta_{0})}\right)   =  & \ b_{0}\overline{r}_1 \overline{r}_2(\theta-\theta_{0})+2\Im(\overline{b}_0r_1r_2)\ln\left(\frac{z-r_2}{z_{0}-r_2}\right) \nonumber \\
& +iH_{1}\left[
\ln\left(\frac{z-r_2}{z_{0}-r_2}\right)
- \ln\left(\frac{z-r_1}{z_{0}-r_1}\right) 
\right].
\label{A-grau2-r1r2-modulo1=denovodenovo}
\end{align}			
On the other hand  $H_{1} $ can be written as
\begin{equation*} 
H_{1}  =-\frac{1}{|r_2-r_1|^2}[2i\Im[\overline{b}_0r_1(r_1-r_2)]-2\Im(r_1\overline{r}_2)+b_{1}(\overline{r}_2-\overline{r}_1)].
\end{equation*}

Now using the corresponding values of $b_{1}$ for each solution of the system, we summarise the values of  $H_{1}$ as
\begin{align*}
\begin{cases}
[\Im(b_{0})+\frac{\Im(b_{1})r_1}{2}]i, 
\hspace*{3cm} \mbox{if } r_2=-r_1  \mbox{ and } \Re(r_1)^2=1,\\[0.5ex]
-\left[2\Im(\overline{b}_0 r_1^2) +\frac{\Re(b_{1})}{\Im(r_1)}\right] \frac{i}{2}, 
\hspace*{2cm}  \mbox{if } r_2=-r_1  \mbox{ and } \Re(r_1)^2\neq 1,\\[0.5ex]
 \frac{\left[2\Im(\overline{r}_1b_{0})+\Im(b_{1})\right] i}{2\Re(r_1)}, 
 \hspace*{3.3cm}  \mbox{if } r_2=-\overline{r}_1
 \mbox{ and } \Re(r_1)\Im(r_1) \neq 0,  \\[0.5ex]
\{-\Im[\overline{b}_0r_1(r_1-r_2)]+\frac{\Im(\overline{r}_1 r_2)]}{\Re(r_1+r_2)}[\Re(b_{1})-\Im(r_1+r_2)] \}\frac{2i}{|r_2-r_1|^2}, 
\hspace*{0.5cm}  \mbox{if } \Re(r_1+r_2)\neq 0.
\end{cases}
\end{align*}	

Since $|r_1|=|r_2|=1$ and $r_1\neq r_2$, from the above values for $H_{1}$ one can see that $\Re(H_{1})=0$.
Thus, equation \eqref{A-grau2-r1r2-modulo1=denovodenovo} can be written as
\begin{equation*}  
\ln\left(\frac{w(\theta)}{w(\theta_{0})}\right)   =  b_{0}\overline{r}_1 \overline{r}_2(\theta-\theta_{0})+2\Im(\overline{b}_0r_1r_2)\ln\left(\frac{z-r_2}{z_{0}-r_2}\right) +\Im(H_{1})\left[\ln\left(\frac{z-r_1}{z_{0}-r_1}\right)-\ln\left(\frac{z-r_2}{z_{0}-r_2}\right)\right].
\end{equation*}					
	
Denoting $r_1=e^{i\varphi}$ and $r_2=e^{i\phi}$ as in \eqref{caso3.1.2b-f3}, we have
\begin{equation*} 
\frac{z-r_1}{z_{0}-r_1}  = e^{i(\theta-\theta_{0})/2} \frac{\sin[(\theta-\varphi)/2]}{\sin[(\theta_{0}-\varphi)/2]} \quad \mbox{and}  \quad 
\frac{z-r_2}{z_{0}-r_2}= e^{i(\theta-\theta_{0})/2} \frac{\sin[(\theta-\phi)/2]}{\sin[(\theta_{0}-\phi)/2]}.
\end{equation*}					
Hence, it follows that  
\begin{align*}
\ln\left(\frac{w(\theta)}{w(\theta_{0})}\right) = 
& \Re(b_{0}\overline{r}_1\overline{r}_2)(\theta-\theta_{0}) + \ln\left|\frac{\sin[(\theta-\phi)/2]}{\sin[(\theta_{0}-\phi)/2]}\right|^{-2\Im(b_{0}\overline{r}_1\overline{r}_2)} \\
& + \ln\left|\frac{\sin[(\theta-\varphi)/2]}{\sin[(\theta_{0}-\varphi)/2]}\right|^{\Im(H_{1})} + \ln\left|\frac{\sin[(\theta-\phi)/2]}{\sin[(\theta_{0}-\phi)/2]}\right|^{-\Im(H_{1})},
\end{align*}					
and the weight function becomes
\begin{equation*} 
w(\theta) = \tau(b_0;b_1) e^{\Re(b_{0}\overline{r}_1\overline{r}_2)\theta}\left[\sin^2(\theta/2-\phi/2)\right]^{-[\frac{1}{2}\Im(H_{1})-\Im(\overline{b}_{0}{r_1r_2})]} \left[\sin^2(\theta/2-\varphi/2)\right]^{\frac{1}{2}\Im(H_{1})},
\end{equation*}				
with $\tau(b_0;b_1)=e^{-\Re(\overline{b}_{0}{r_1r_2})\theta_{0}}\left[\sin^2(\theta_{0}/2-\phi/2)\right]^{[\frac{1}{2}\Im(H_{1})-\Im(\overline{b}_{0}{r_1r_2})]} \left[\sin^2(\theta_{0}/2-\varphi/2)\right]^{-\frac{1}{2}\Im(H_{1})}$.

Assuming that $A(1)[w(2\pi)-w(0)]=0$, we obtain two possible weight functions
\begin{enumerate}
\item[i)] If $\sin(\varphi/2)\sin(\phi/2)\neq 0$, 
(i.e., $r_1 \neq 1$, $r_2 \neq 1$), 
it is necessary that $\Re(\overline{b}_{0}r_1r_2)=0$ and the weight function becomes 
\begin{equation*}
w(\theta)= \tau(b_0;b_1) \left[\sin^2(\theta/2-\phi/2)\right]^{-[\frac{1}{2}\Im(H_{1})-\Im(\overline{b}_{0}{r_1 r_2})]} \left[\sin^2(\theta/2-\varphi/2)\right]^{\frac{1}{2}\Im(H_{1})}.
\end{equation*}
Here, $B(z)=(\overline{b}_{0}r_1r_2+2i)z^2+b_{1}z+b_{0}$, where 
$b_{0}$ satisfies $\Re(\overline{b}_{0}{r_1r_2})=0$.
	
\item[ii)] Consider $\sin(\varphi/2)\sin(\phi/2)= 0$.
(i.e., $(r_1-1)(r_2-1)=0$). 
First we suppose $r_2=1$, hence $\phi=0$ and 
\begin{equation*}
w(\theta)=\tau(b_0,b_1) e^{\Re(\overline{b}_{0}{r_1})\theta}\left[\sin^2(\theta/2)\right]^{-[\frac{1}{2}\Im(H_{1})-\Im(\overline{b}_{0}{r_1})]} \left[\sin^2(\theta/2-\varphi/2)\right]^{\frac{1}{2}\Im(H_{1})},
\end{equation*}
with  $\Im(H_{1})/2-\Im(\overline{b}_{0}{r_1}) < 1/2$ and $\Im(H_{1})/2 > -1/2$.

Now choosing $r_1=-1$, 
the weight function becomes
\begin{equation*}
 w(\theta)=\tau(b_{1},b_{0}) e^{-\Re(b_{0})\theta}\left[\sin^2(\theta/2)\right]^{\frac{1}{2}\Im({b}_{0})+\frac{1}{4}\Im(b_{1})} \left[\cos^2(\theta/2)\right]^{{\frac{1}{2}\Im({b}_{0})-\frac{1}{4}\Im(b_{1})}},
\end{equation*}
where $ \Im(b_0)/2+\Im(b_1)/4 > -1/2 $ and  $ \Im(b_0)/2- \Im(b_1)/4 > -1/2$.
In this case  $B(z)=(-\overline{b}_0 +2i)z^2 +i \Im(b_1)z + b_0$.

\begin{remark}  \label{remark35}
Denoting $\eta = \Re(b_0) \in \mathbb{R},$ $ \lambda=  \Im(b_0)/2+ \Im(b_1)/4$ and $\beta =  \Im(b_0)/2-\Im(b_1)/4$, hence
$b_{0}=\eta+i(\lambda+\beta)$ and $\Im(b_{1})=2(\lambda-\beta)$, then
we get a new semi-classical weight function on the unit circle
\begin{equation} \label{newfunction}
w(\theta)=\tau(\lambda,\beta,\eta) \, e^{- \eta \, \theta }\, [\sin^2(\theta/2)]^{\lambda}[\cos^2(\theta/2)]^{\beta},
\end{equation}
where $\tau(\lambda,\beta,\eta)$ is a constant, $\eta \in \mathbb{R}$,  $\lambda >-1/2$, $\beta >-1/2$, and $0 \leqslant \theta \leqslant 2\pi$. 
 This weight function satisfies the Pearson-type equation \eqref{Eq-Tipo-Pearson-1} with $A(z)=z^2-1$ and $B(z)
= i[(\lambda+\beta+i\eta+2)z^{2}+2(\lambda-\beta)z + \lambda+\beta-i\eta]$. Hence, the weight function \eqref{newfunction} belongs to the class $(2,2)$.
\end{remark}

\begin{remark} \label{remark36}
We observe that the semi-classical weight function \eqref{newfunction}  is an extension of other semi-classical weight functions, since

- when $\eta=0$ this weight function is  \eqref{JOUC}, known as the Jacobi weight function on the unit circle,  see \cite{Ma00}. 

- when $\eta=0$ and $\beta=0$ this weight function is  \eqref{CJP}, and the associated orthogonal polynomials are known as circular Jacobi polynomials, see \cite{Is05}.

- when $\beta=0$ this weight function is  \eqref{Teo3.3},  studied in \cite{Ra10}. 
\end{remark}

\noindent {\bf Special case}

Considering the weight function \eqref{newfunction} as
\begin{equation*}
\tilde{w}(t)  =  \tau(\beta, \eta) \, e^{- \eta \, t}
[\sin^{2}(t/2)]^{\beta}, \quad t \in [0,2\pi],
\end{equation*}
setting $\theta = t - \pi$
and  the rotation $w(\theta) = \tilde{w}(\theta+\pi)$, we obtain
\begin{equation} \label{exp_cos}
w(\theta)=\tau(\beta,\eta) \, e^{- \eta \, \theta}\, [\cos^2(\theta/2)]^{\beta}, \quad \theta \in[-\pi,\pi]
\end{equation}
that vanishes for $\theta = \pm \pi$. Hence the inner product becomes
\begin{equation*}
\langle f, g \rangle = \tau(\beta,\eta) \int_{-\pi}^{\pi} f(e^{i\theta}) \overline{g(e^{i\theta})} \, e^{-\theta \, \eta}\, [\cos^2(\theta/2)]^{\beta} d\theta.
\end{equation*}
This weight function satisfies the Pearson-type equation \eqref{Eq-Tipo-Pearson-1} with $A(z)=z^2-1$.
\end{enumerate}
\end{description}

\subsubsection{$A(z)= (z-r_1)(z-r_2)$ with $|r_1| \neq 1$ and $|r_2| = 1$}

We split the analysis into two cases  $r_1 \neq 0 $ and $r_1 =0$.

\begin{description}
\item[1.] For $A(z)= (z-r_1)(z-r_2)$ with $|r_1 |\neq 1$, $r_1 \neq 0$, and $|r_2|=1$,  the solution of the linear system \eqref{Sistema-A-grau2} is
\begin{equation*} 
b_{2} = \frac{\overline{b}_{0} r_2}{\overline{r}_1} + 2i \quad \mbox{and} \quad
b_{1}= -\frac{1}{|r_1|^2}(r_1^2r_2 \overline{b}_{0}+\overline{r}_1b_{0}) -(r_1+r_2)i
\end{equation*}
with $b_{0}$ an arbitrary complex number.

Substituting this solution in \eqref{Eq-Tipo-Pearson-4} we obtain
\begin{equation*} 
i\frac{dw(\theta)/dz}{w(\theta)}= \frac{\overline{b}_{0}r_1 r_2 z-b_{0}\overline{r}_1}{|r_1|^2z(z-r_2)}.
\end{equation*}
Similar to what was done in \eqref{Eq-A-grau1-r-modulo1}, considering $r_2=e^{i\phi}$, we obtain the weight function
\begin{equation*} 
w(\theta) = \tau(b_{0})e^{\Re\left(\frac{\overline{b}_{0} r_2}{\overline{r}_1}\right) 
\theta}\left[\sin^2(\theta/2-\phi/2)\right]^{
\Im\left(\frac{\overline{b}_{0} r_2}{\overline{r}_1}\right) 
},
\end{equation*}
where $\tau(b_{0})$ is a constant.

Imposing that  $A(1)[w(2\pi)-w(0)]=0$, we obtain two possible weight functions

\begin{enumerate}
\item[i)] If $\sin(\phi/2) \neq 0$ (i.e., $r_2 \neq 1$), it is necessary that   $\Re(\overline{b}_{0}{r_1 r_2})=0$ and the weight function becomes
\begin{equation*} 
w(\theta) =\tau(b_{0})\left[\sin^2(\theta/2-\phi/2)\right]^{\Im\left(\frac{\overline{b}_{0} r_2}{\overline{r}_1}\right)}.
\end{equation*}
In this case  $B(z)=\left(\frac{\overline{b}_{0} r_2}{\overline{r}_1} + 2i\right)z^2-
\left(  \frac{\overline{b}_{0} r_1 r_2}{\overline{r}_1}+\frac{b_{0} }{r_1}  +(r_1+r_2)i \right) z
+b_{0}$. We observe that the condition $\Re(\overline{b}_{0}{r_1 r_2})=0$ is equivalent to
$b_0 \in \mathbb{R}$ if $\Re(r_1 r_2)=0$ or $b_0 = i\Im(b_{0})r_1 r_2/\Re(r_1 r_2)$ if $\Re(r_1 r_2)\neq 0$.

\item[ii)] If $\sin(\phi/2)=0$ (i.e., $r_2=1$), 
then
\begin{align} \label{estaesta4}
w(\theta)=\tau(b_{0}) e^{\Re\left(\frac{{b}_{0}}{{r}_1}\right) \theta}
\left[\sin^2(\theta/2)\right]^{-\Im\left(\frac{{b}_{0}}{{r}_1}\right)},
\end{align}
with  $\Im({b}_{0}/{r}_1) < 1/2.$

\begin{remark}   \label{remark37}
As in Remarks \ref{remark33} and \ref{remark34}, we see that the weight function \eqref{estaesta4} is the  weight function \eqref{Teo3.3} considering $r=r_1 \in \mathbb{C}$,  
$b_{0}=-ir \overline{b}$, $ b= \lambda + i \eta$, $\lambda =  -\Im({b}_{0}/{r})$, and $\eta = - \Re({b}_{0}/ {r})$. This weight function satisfies the Pearson-type equation \eqref{Eq-Tipo-Pearson-1} with polynomials $A(z)=(z-1)(z-r)$ and $B(z)= i\{(b+2)z^2+[\overline{b}-1-r(b+1)]z-{r}\overline{b}\}$.
\end{remark}

\end{enumerate}

\item[2.] If $r_1 =0$ and $|r_2|=1$, setting $r=r_2$ we can write   $A(z)=z(z-r)$ with $|r|=1$,  and the solution of the linear system \eqref{Sistema-A-grau2} is
$b_{2}= -  \overline{b}_1 r +3i $ and $b_{0}= 0$
with $b_{1}$ an arbitrary complex number.

Using this solution the  representation \eqref{Eq-Tipo-Pearson-4} becomes
\begin{equation*}  
i\frac{dw(\theta)/dz}{w(\theta)} 
= \frac{-r(\overline{b}_1-i\,\overline{r})z+(b_1+ir)}{z(z-r)}, \quad z=e^{i\theta}.
\end{equation*}
Analogously to what was done in \eqref{Eq-A-grau1-r-modulo1}, considering $r=e^{i\varphi}$, it follows that
\begin{equation*}
w(\theta) 
= \tau (b_{1})e^{-\Re(\overline{b}_1{r}-i)\theta}\left[\sin^2(\theta/2-\varphi/2)\right]^{-\Im(\overline{b}_1{r}-i)},
\end{equation*}
with $\Im(\overline{b}_1r-i)<1/2$ and
$\tau (b_{1}) = e^{\Re(\overline{b}_1{r}-i)\theta_{0}}\left[\sin^2(\theta_{0}/2-\varphi/2)\right]^{\Im(\overline{b}_1{r}-i)}$.

Assuming that  $A(1)[w(2\pi)-w(0)]=0$, we obtain two possible weight functions
\begin{enumerate}
\item[i)] If $\sin(\varphi/2)\neq 0$, 
it is necessary that  $\Re(\overline{b}_{1}{r})=0$ and we obtain
\begin{equation*}
	w(\theta)=\tau (b_{1})\left[\sin^2(\theta/2-\varphi/2)\right]^{-\Im(\overline{b}_1{r}-i)}, 
\end{equation*}
with $\Im(\overline{b}_1r-i)<1/2.$	
 Here, $A(z)=z(z-r)$,  $B(z)=(3i-\overline{b}_1 r)z^2+b_{1}z$, and $\Re(\overline{b}_{1}r)=0$  is equivalent to
$ b_1 \in \mathbb{R}$ if $\Re(r)=0$  or $b_1 = i\Im(b_{1})r/\Re(r)$ if $\Re(r)\neq 0$.

\item[ii)] If $\sin(\varphi/2)= 0$, 
then
\begin{equation*}
w(\theta)=\tau (b_{1})e^{- \Re(\overline{b}_1-i)\, \theta}\left[\sin^2(\theta/2)\right]^{ -\Im(\overline{b}_1-i) },
\end{equation*}
with $ \Im(\overline{b}_1-i) < 1/2.$

\begin{remark}  \label{remark38}
Once more we obtain the  weight function \eqref{Teo3.3} considering  $b_{1}=i(\lambda-i\eta)-i$, 
$\lambda = -\Im(\overline{b}_1-i),$  and
$ \eta = 	\Re(\overline{b}_1-i).$
 This weight function satisfies the Pearson-type equation \eqref{Eq-Tipo-Pearson-1} with  $A(z)=z(z-1)$ and $ B(z)=i[(b+2)z^2+(\overline{b}-1)z]$, for $ b= \lambda + i \eta.$  
\end{remark}
\end{enumerate}
\end{description}

\subsubsection{$A(z)= (z-r_1)(z-r_2)$ with $|r_1| \neq 1$ and $|r_1r_2| = 1$}
\label{subsection334}

Since we are considering $|r_1| \neq 1$, $|r_2| \neq 1,$ and  $|r_2|=1/|r_1|$, we find the solutions of linear system \eqref{Sistema-A-grau2}
for two possibilities
$r_2= 1/\overline{r}_1$ and $r_2 \neq 1/\overline{r}_1$.

\begin{description}
\item[1.] First we consider  $|r_1| \neq 1$ and  $r_2=1/\overline{r}_1$. 
Denoting $r_1=r$ and 
$r_2=1/\overline{r}$, we get $A(z)= (z-r)(z-1/\overline{r})$. In this case the solution of  \eqref{Sistema-A-grau2} is given by 
\begin{equation*}
 b_{2}=\frac{\overline{b}_0r}{\overline{r}} + 2i \quad 
 \mbox{and} \quad
 b_{1}=\begin{cases} 
\frac{\Re(b_{1})r-(|r^2|+1)i}{\Re(r)}, & \mbox{if } \Re(r)\neq 0,\\[0.5ex]   
\frac{|r^2|+1}{\Im(r)}+\Im(b_{1})i, & \mbox{if } \Re(r)= 0,
\end{cases} 
\end{equation*}
where $b_{0}$ is an arbitrary complex number, and either $\Re(b_{1})$ or $\Im(b_{1})$ is an arbitrary real number depending on whether $\Re(r)=0$ or $\Re(r) \neq 0$.

The representation \eqref{Eq-Tipo-Pearson-4} becomes
\begin{equation} \label{frfrfrfr}
i\frac{dw(\theta)/dz}{w(\theta)}=\frac{\overline{b}_{0}rz^2+[b_{1}\overline{r}+i(|r|^2+1)] z+b_{0}\overline{r}}{\overline{r}z(z-r)(z-1/\overline{r})}, \quad z=e^{i\theta},
\end{equation}
with		
\begin{align} \label{def_fr}
b_{1}\overline{r}+i(|r|^2+1)=
\begin{cases} 
\frac{\Re(b_{1})|r|^2-\Im(r)(|r^2|+1)}{\Re(r)}, & \mbox{if } \Re(r)\neq 0,\\[0.5ex]  
\Im(r)\Im(b_{1}), & \mbox{if } \Re(r)= 0.
\end{cases} 
\end{align}				
Observe that $b_{1}\overline{r}+i(|r|^2+1)$ is real. Integrating \eqref{frfrfrfr} with respect to $z=e^{i\theta}$ 
and considering $z_{0}=e^{i\theta_{0}}, \theta_{0} \in [0,2\pi]$,
we get
\begin{equation*} 
i\ln\left(\frac{w(\theta)}{w(\theta_{0})}\right) = \frac{b_{0}\overline{r}}{r}i(\theta-\theta_{0}) + H\ln\left(\frac{z-r}{z_{0}-r}\right)-\overline{H}\ln\left(\frac{z-1/\overline{r}}{z_{0}-1/\overline{r}}\right),
\end{equation*}
with
\begin{equation*}
H=\frac{\overline{b}_0r^2+ b_{1}\overline{r}+i(|r|^2+1)}{|r|^2-1} + \frac{b_{0}(\overline{r})^2}{|r^2|(|r|^2-1)}.
\end{equation*}
Since 
$$ \frac{z-r}{z_{0}-r} =\frac{z}{z_{0}}\frac{1-r\overline{z}}{1-r\overline{z}_{0}}		
\quad \mbox{and} \quad
\frac{z-1/\overline{r}}{z_{0}-1/\overline{r}}
=\frac{z}{z_{0}}\frac{\overline{r}-\overline{z}}{\overline{r}-\overline{z}_{0}}, $$			
it follows that
\begin{equation*}
\ln\left(\frac{w(\theta)}{w(\theta_{0})}\right)
= \left(\frac{b_{0}\overline{r}}{r}-\overline{H}\right)(\theta-\theta_0) + i(\overline{H}-H)\ln\left|\frac{z-r}{z_0-r}\right| + (H+\overline{H})\arg\left(\frac{z-r}{z_{0}-r}\right).
\end{equation*}			
Straightforward manipulations lead to
\begin{equation*}
G=\frac{b_{0}\overline{r}}{r}-\overline{H}=-\frac{2 \Re(b_{0}\overline{r}/r)+b_{1}\overline{r}+i(|r|^2+1)}{|r|^2-1} \in \mathbb{R}.
\end{equation*}		
Hence,		
\begin{equation*}
\Im(H)=\Im\left(\frac{\overline{b}_0{r}}{\overline{r}}\right) 
\quad \mbox{and} \quad 
\Re(H)=\frac{(|r|^2+1)\Re(\overline{b}_0{r}/\overline{r})+b_{1}\overline{r}+i(|r|^2+1)}{|r|^2-1}.
\end{equation*}

Therefore, the weight function can be written as 
\begin{equation*}
w(\theta)  = \tau(G;H) \, e^{[G+2\Re(H)]\theta} \, e^{2\Re(H)\arg(1-\overline{z}r)}\left|z-r\right|^{2\Im(H)},
\end{equation*}
where $\tau(G,H)=w(\theta_{0})e^{-[G+2\Re(H)]\theta_{0}}e^{-2\Re(H)\arg(1-\overline{z}_{0}r)}\left|z_{0}-r\right|^{-2\Im(H)}$.
				
To verify the condition $A(1)[w(2\pi)-w(0)]=0$, it is necessary that $G+2\Re(H)=0$, which is equivalent to
$  b_{1}\overline{r}+i(|r|^2+1) = - 2 \Re (\overline{b}_{0}r^2).$
From \eqref{def_fr} we obtain
\begin{equation*} 
b_{1}= -\frac{2 \Re(b_{0}\overline{r}^2)+i(|r|^2+1)}{\overline{r}}  
\quad \mbox{and} \quad
H =-\frac{{b}_{0}\overline{r}}{r}.
\end{equation*}			
Finally the weight function is given by	
\begin{equation} \label{Medida-tipo-Teorema-3.2-ranga}
w(\theta)=\tau(b_{0},r)e^{-2\Re({b}_{0}\overline{r}/{r})\arg (1-\overline{z}r)}|z-r|^{-2\Im({b}_{0}\overline{r}/{r})}, 
\end{equation}
with $\tau(b_{0},r)$ a constant. Furthermore, this weight function satisfies the Pearson-type equation \eqref{Eq-Tipo-Pearson-1} with $A(z)= (z-r)(z-1/\overline{r})$ 
and $ B(z)= \left(2i + \overline{b}_{0}r/\overline{r}\right)z^2-[2 \Re(b_{0}\overline{r}^2)+(|r|^2+1)i]z/\overline{r}+b_{0}.$
 
\begin{remark} \label{remark39}
Choosing  
$b_0 = - \overline{u}/ \overline{r}$ in \eqref{Medida-tipo-Teorema-3.2-ranga}, we recover the known semi-classical weight function \eqref{Teo3.2}, with the polynomials 
$A(z)= (z-r)(z-1/\overline{r})$ and 
$B(z)=\left(2i- u/\overline{r}\right)z^2+[2\Re(ur)-(|r|^2+1)i]z/\overline{r} -\overline{u}/\overline{r}.$
The Pearson-type equation for the weight function \eqref{Medida-tipo-Teorema-3.2-ranga} with parameter
$b_0 = 2(r/\overline{r})i$, has appeared in \cite{CaSu97} in terms of moment functional, and $B(z) = i [-(|r|^2+1)z + 2r]/\overline{r}$.  The weight function \eqref{Medida-tipo-Teorema-3.2-ranga} belongs to the class $(2,2)$.
\end{remark}

\item[2.] Now we consider  $|r_1| \neq 1$ and  $r_2 \neq 1/\overline{r}_1$. Let $\mathfrak{u}$ and $\mathfrak{v}$ be given as
\begin{align*}
\mathfrak{u} 
&=(1+|r_1^2|)\Im(1/\overline{r}_1)-(1-|r_1|^2)\Re(1/\overline{r}_1)b_{0}, \\[0.5ex]
\mathfrak{v} 
& = (\overline{r}_1+\overline{r}_2)(r_1r_2+1)-(|r_1|^2+1)\Re(r_2+1/\overline{r}_1)-(1-|r_1|^2) \Im[\overline{b}_{0}(r_2-1/\overline{r}_1)].
\end{align*}
Notice that if $r_2=-1/r_1$ and $\Re(r_1)=0$, then $r_2=1/\overline{r}_1$, this possibility was considered before. Therefore, the other solutions of the linear system \eqref{Sistema-A-grau2} are
\begin{align*}
& b_{2}=\overline{b}_{0} r_1r_2 +2i, \\
& b_{1} = 
\begin{cases}  
\frac{(r_1+r_2) \mathfrak{v}}{\Im[(r_1+r_2)(\overline{r}_1\overline{r}_2+1)]},  
& \mbox{if } r_2  \neq -1/{r_1}, \\[0.5ex]
\frac{(\overline{r}_1-|r_1|^2{r_1})\mathfrak{u}+|r_1^2-1|^2 i}{\Re(r_1)(1-|r_1|^2)},  
& \mbox{if } \Re({r}_1) \neq 0  \mbox{ and } r_2=-1/r_1,
\end{cases} \\
& b_{0}= 
\begin{cases}
\frac{\Re(b_{0})(r_2-1/\overline{r}_1)^2}{\Re[(r_2-1/\overline{r}_1)^2]}, & \mbox{if }  \Re[(r_2-{1}/{\overline{r}_1})^2]\neq 0,\\[0.5ex]
\Im(b_{0}) i, & \mbox{if }   \Re[(r_2-{1}/{\overline{r}_1})^2]= 0.
\end{cases}
\end{align*}
Substituting these values in \eqref{Eq-Tipo-Pearson-4} we get
\begin{equation*} 
i \frac{d w(\theta)/d z}{w(\theta)} =
\frac{\overline{b}_0 r_1r_2 z^2 + [b_{1}+(r_1+r_2)i]z + b_0}{z(z-r_1)(z-r_2)}, \quad z=e^{i\theta}.
\end{equation*}
Notice that 
\begin{equation*} 
\overline{b}_{0}r_1r_2=
\begin{cases}
-\frac{\Re(b_{0})|r_2-1/\overline{r}_1|^2}{\Re[(r_2-1/\overline{r}_1)^2]}, & \mbox{if } \Re[(r_2-{1}/{\overline{r}_1})^2]\neq 0,\\[0.5ex]
-(ir_1r_2)\Im(b_{0}), & \mbox{if } \Re[(r_2-{1}/{\overline{r}_1})^2]= 0.
\end{cases}
\end{equation*}
Therefore, $\overline{b}_{0}r_1r_2 \in \mathbb{R}$. We thus can write
\begin{equation*} 
b_{1}+(r_1+r_2)i=
\begin{cases}  
\frac{(r_1+r_2)\{\mathfrak{v}-\Im[(\overline{r}_1+\overline{r}_2)(r_1r_2+1)]i\}}{\Im[(r_1+r_2)(\overline{r}_1\overline{r}_2+1)]},  & \mbox{if }  r_2  \neq -1/{r_1}, \\[0.5ex]
\frac{(r_1^2-1)b_{0}}{r_1},  & \mbox{if } r_2=-1/r_1 \mbox{  and } \Re(r_1)\neq 0.
\end{cases}
\end{equation*}
Using integration once more, with $z_{0}=e^{i\theta_{0}}$, we get
\begin{equation*} 
\ln\left(\frac{w(\theta)}{w(\theta_{0})}\right) = b_{0}\overline{r}_1\overline{r}_2(\theta-\theta_{0}) +iH
\left[\ln\left(\frac{z-r_2}{z_{0}-r_2}\right) -
\ln\left(\frac{z-r_1}{z_{0}-r_1}\right)  \right],
\end{equation*}
where
$H=[\overline{b}_{0}r_1 r_2  (r_1+r_2)+b_{1}+(r_1+r_2)i]/(r_1-r_2)=0.$
Hence, the weight function is
$w(\theta)=w(\theta_{0})e^{b_{0}\overline{r}_1\overline{r}_2(\theta-\theta_{0})}.$
From the condition  $A(1)[w(2\pi)-w(0)]=0,$ it follows that ${b}_{0}=0$.
Hence, we obtain the Lebesgue weight function.
\end{description}

\subsubsection{$A(z)= (z-r_1)(z-r_2)$ with $|r_1| \neq 1$, $|r_2| \neq 1$  and $|r_1r_2| \neq 1$}

In this case the solutions of the linear system \eqref{Sistema-A-grau2} are 
\begin{equation*} 
b_{2}=\Re(b_{2})+2i, \quad
b_{1}= -(r_1+r_2)[\Re(b_{2})+i] \quad \mbox{and} \quad
b_{0}=r_1r_2\Re(b_{2}),
\end{equation*}
with $\Re(b_{2})$  an arbitrary real number.

This solution  includes also the cases $r_1=r_2=0$ or $r_1=0$ and $r_2 \neq 0$ with  $|r_2| \neq 1$, i.e., when $A(z)=z^2$ or $A(z)=z(z-r)$, with $|r| \neq 1$.
The representation \eqref{Eq-Tipo-Pearson-4} becomes
$ (idw(\theta)/dz)/w(\theta)=\Re(b_{2})/z.$
Again, assuming the condition $A(1)[w(2\pi)-w(0)]=0$, we obtain the Lebesgue weight function.

\setcounter{equation}{0}
\section{Applications} 
\label{sec_Application}

In this section we provide structure relations for the MOPUC (using Theorem \ref{prop_rel_est}) and non-linear difference equations for Verblunsky coefficients using Theorem \ref{TEO_eq_de_dif_caso_geral} associated with examples of semi-classical weight function on the unit circle.

\subsection{Example 1} 
\label{subsection_exemplo1}

Here we consider the semi-classical weight function  \eqref{Teo3.3}, studied in \cite{Ra10}, namely
\begin{equation} \label{wf_interesse_teta}
w(\theta)=\tau(b)e^{-\eta \theta}[\sin^2(\theta/2)]^{\lambda},
\end{equation}
where  $\eta \in \mathbb{R}$, $\lambda > -1/2$, $b=\lambda+i\eta$, and  $  \tau(b)= e^{\pi\eta} 2^{b+\overline{b}}|\Gamma(b+1)|^2/[2\pi\Gamma(b+\overline{b}+1)]. $

The monic orthogonal polynomials, $\Phi_n(z)$, can be expressed in terms of hypergeometric functions as
\begin{equation*} 
\Phi_n(z)  =\frac{(b+\overline{b}+1)_n}{(b+1)_n} {}_2F_1(-n,b+1,b+\overline{b}+1;1-z),  \quad n \geqslant 0.
\end{equation*}
see \cite{Ra10}. 
The Verblunsky coefficients are given as
\begin{equation} \label{alfas}
\alpha_{n} = - \frac{(b)_{n+1}}{(\overline{b}+1)_{n+1}}, \quad n \geqslant 0.
\end{equation}

In Section \ref{sec_Characterization} we determined the polynomials $A$ and $B$, with $A(1)=0$, of the Pearson-type equation \eqref{Eq-Tipo-Pearson-1}, see Remarks \ref{remark33}, \ref{remark34}, \ref{remark35}, \ref{remark37}, and \ref{remark38}. The possibilities are
\begin{enumerate}
\item When $A(z)$ has degree 1, $A(z)=z-1$ and
$B(z) = i [(b+1)z+\overline{b}].$ 
Hence, the weight function \eqref{wf_interesse_teta} belongs to the class $(1,1)$.

\item When $A(z)$ has degree 2, $A(z)=(z-1)(z-r)$, $r \in \mathbb{C}$ and
$ B(z)  = i[(b+2)z^2 + [\overline{b}-1   -r(b+1)]z  - r\overline{b}].$
For special values of $r$:

i) If $r=-1$, then  $A(z)=z^2-1$ and
$ B(z) = i [(b+2)z^{2} +(b+\overline{b})z +\overline{b}].$

ii) If $r=1$, then $A(z)=(z-1)^2$ and
$B(z) = i [(b+2)z^{2} +(\overline{b}-b-2)z -\overline{b}].$

iii) If $r=0$, then  $A(z)=z(z-1)$ and 
$B(z) = i [(b+2)z^{2} +(\overline{b}-1)z].$

Here we observe that the weight function \eqref{wf_interesse_teta} also  belongs to the class $(2,2)$, as expected.
\end{enumerate}

We remark that this semi-classical weight function satisfies several Pearson-type equations. In the next subsections we provide the structure relations for the monic orthogonal polynomials $\Phi_n$  and the non-linear difference equations for the Verblunsky coefficients, $\alpha_n$, corresponding to the different choices of polynomial $A$. 

\subsubsection{Structure relations}

Using Theorem \ref{prop_rel_est} and Corollary \ref{coro2}, with the different choices for the polynomials $A$ and $B$, we can obtain several structure relations for the MOPUC.

With $A(z)= z-1$, the equations \eqref{antiga50} and \eqref{antiga49} for $\s_{n,n}$  and \eqref{lnn1} yield
\begin{equation} \label{l_b_barra}
\gamma_{n}  = \overline{b} - (b+n-1)\overline{\alpha}_{n-1} \alpha_{n-2}.
\end{equation}

From Theorem \ref{prop_rel_est} and using the relation \eqref{l_b_barra},  we present several structure relations for the MOPUC associated with the weight function \eqref{wf_interesse_teta} in the  next result.

\begin{theorem} \label{TheoSR}
The MOPUC associated with the semi-classical weight function \eqref{wf_interesse_teta} satisfy the following structure relations, for $n \geqslant 2$.

\noindent {\rm i)} For $A(z) = z-1$,
\begin{equation} \label{SR1}
(z-1)\Phi_{n}'(z)= -(\overline{b}+n)[1-|\alpha_{n-1}|^2]\Phi_{n-1}(z)+ n \Phi_{n}(z).
\end{equation}

\noindent {\rm ii)} For $A(z) = (z-1)(z-r)$, $r \in \mathbb{C}$,
\begin{equation*} 
(z-1)(z-r)\Phi_{n}'(z)=
n\Phi_{n+1}(z)
-[\overline{b} +n(r+1)] \Phi_{n}(z)
+r(\overline{b}+n)[1-|\alpha_{n-1}|^2] \Phi_{n-1}(z)
-(b+1)\overline{\alpha}_{n}\Phi_{n}^{\ast}(z),
\end{equation*}

\noindent {$\bullet$} For $A(z) = z(z-1)$,
\begin{equation} \label{SR4}
z(z-1)\Phi_{n}'(z)=   
 n\Phi_{n+1}(z) 
 -(\overline{b}+n) \Phi_{n}(z)
 -(b+1)\overline{\alpha}_{n}\Phi_{n}^{\ast}(z).
\end{equation}

\noindent {$\bullet$} For $A(z) = (z-1)^2$,
\begin{equation} \label{SR3}
(z-1)^2\Phi_{n}'(z)= 
 n\Phi_{n+1}(z)
 -(\overline{b}+2n) \Phi_{n}(z)
 +(\overline{b}+n)[1-|\alpha_{n-1}|^2]\Phi_{n-1}(z)
 -(b+1) \overline{\alpha}_{n}\Phi_{n}^{\ast}(z),
\end{equation}

\noindent {$\bullet$} For $A(z) = z^2-1$,
\begin{equation} \label{SR2}
(z^2-1)\Phi_{n}'(z)=
n \Phi_{n+1}(z)
-\overline{b} \, \Phi_{n}(z)
-(\overline{b}+n)[1-|\alpha_{n-1}|^2]\Phi_{n-1}(z) 
-(b+1)\overline{\alpha}_{n}\Phi_{n}^{\ast}(z).
\end{equation}
\end{theorem}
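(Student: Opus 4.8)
The plan is to specialize Theorem~\ref{prop_rel_est} to each of the five Pearson-type equations listed for the weight function \eqref{wf_interesse_teta}, substitute the corresponding polynomials $A$ and $B$ into the explicit coefficient formulas \eqref{An-nova}--\eqref{antiga49}, and simplify using the auxiliary identity \eqref{l_b_barra} for $\gamma_n$. The key preliminary is \eqref{l_b_barra} itself: from Remark~\ref{remark33} the Pearson data for $A(z)=z-1$ is $a_2=0$, $a_1=1$, $a_0=-1$, $b_2=0$, $ib_1=-(b+1)$, $ib_0=-\overline{b}$. Matching \eqref{antiga50} (which reads $\s_{n,n}=n$ when $a_2=0$, $a_1=1$) against \eqref{antiga49} forces $ib_1+(n+1)a_1-a_0\overline{\gamma}_n-[ib_0+(n+1)a_0]\alpha_n\overline{\alpha}_{n-1}=n$; plugging in the values and solving for $\gamma_n$ (equivalently, taking the conjugate) yields $\gamma_n=\overline{b}-(b+n-1)\overline{\alpha}_{n-1}\alpha_{n-2}$, which is \eqref{l_b_barra}. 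I would state this derivation in a line or two before the theorem, since the body of the excerpt already announces \eqref{l_b_barra}.

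For part~(i), with $A(z)=z-1$: since $a_2=0$ the leading term $na_2\Phi_{n+1}$ drops and $\R_n=(ib_2+a_2)\overline{\alpha}_n=0$, so \eqref{Est-geral2-nova} collapses to $(z-1)\Phi_n'(z)=\s_{n,n}\Phi_n(z)+\s_{n,n-1}\Phi_{n-1}(z)$. From \eqref{antiga50}, $\s_{n,n}=na_1-a_2\gamma_n+\cdots=n$; from \eqref{Bn_n-1}, $\s_{n,n-1}=(ib_0+na_0)(1-|\alpha_{n-1}|^2)=(-\overline{b}-n)(1-|\alpha_{n-1}|^2)$. This gives \eqref{SR1} directly.

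For part~(ii), with $A(z)=(z-1)(z-r)$: here $a_2=1$, $a_1=-(1+r)$, $a_0=r$, and from the stated $B(z)$ one reads off $ib_2=-(b+2)$, $ib_1=-[\overline{b}-1-r(b+1)]$, $ib_0=r\overline{b}$. Then $na_2=n$ gives the $\Phi_{n+1}$ coefficient; $\R_n=(ib_2+a_2)\overline{\alpha}_n=(-(b+2)+1)\overline{\alpha}_n=-(b+1)\overline{\alpha}_n$ gives the $\Phi_n^{\ast}$ coefficient; $\s_{n,n-1}=(ib_0+na_0)(1-|\alpha_{n-1}|^2)=r(\overline{b}+n)(1-|\alpha_{n-1}|^2)$; and $\s_{n,n}=na_1-a_2\gamma_n+[ib_2-(n-1)a_2]\overline{\alpha}_n\alpha_{n-1}=-n(1+r)-\gamma_n-(b+n+1)\overline{\alpha}_n\alpha_{n-1}$, into which I substitute \eqref{l_b_barra} with $\overline{\gamma}_n$ (or rather use \eqref{l_b_barra} directly; one must be careful whether $\gamma_n$ or $\overline{\gamma}_n$ appears — \eqref{antiga50} contains $\gamma_n$, so I use the conjugate of \eqref{l_b_barra}, namely $\gamma_n=b-(\overline{b}+n-1)\alpha_{n-1}\overline{\alpha}_{n-2}$). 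After cancellation the $\overline{\alpha}_n\alpha_{n-1}$ and $\alpha_{n-1}\overline{\alpha}_{n-2}$ terms should vanish or recombine, leaving $\s_{n,n}=-[\overline{b}+n(r+1)]$. The three bulleted special cases ($r=0,1,-1$) are then obtained by substituting the value of $r$ into the general formula and reading off \eqref{SR4}, \eqref{SR3}, \eqref{SR2}; these require no further work.

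The main obstacle is purely bookkeeping: tracking conjugates correctly (whether $\gamma_n$, $\overline{\gamma}_n$, $\alpha_j$, or $\overline{\alpha}_j$ enters each coefficient) and verifying that the $\overline{\alpha}_n\alpha_{n-1}$-type cross terms in $\s_{n,n}$ really do cancel against the $\gamma_n$-contribution from \eqref{l_b_barra}, so that $\s_{n,n}$ reduces to the clean linear-in-$n$ expression claimed. I would double-check this cancellation once in the case $r=0$ (where it is simplest) and once in the case $r=-1$ (where $a_1=0$, a useful consistency check against the known structure relation \eqref{rel_est_conhecida} in the limit $\eta\to 0$), and then assert it in general. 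Everything else is direct substitution into Theorem~\ref{prop_rel_est}.
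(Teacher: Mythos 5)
Your strategy is exactly the paper's: derive the auxiliary identity \eqref{l_b_barra} by equating the two expressions \eqref{antiga50} and \eqref{antiga49} for $\s_{n,n}$ in the case $A(z)=z-1$, then feed each Pearson pair $(A,B)$ into Theorem \ref{prop_rel_est} and simplify. Your computations of $\R_n$, $\s_{n,n-1}$ and the raw form of $\s_{n,n}$ are all correct, and the three bulleted cases are indeed just $r=0,1,-1$ in the general formula of part (ii).

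Two details would derail the calculation as written, both concerning $\gamma_n$. First, the direct matching of \eqref{antiga50} against \eqref{antiga49} gives $\overline{\gamma}_n=b-(\overline{b}+n+1)\alpha_n\overline{\alpha}_{n-1}$, i.e.\ $\gamma_n=\overline{b}-(b+n+1)\overline{\alpha}_n\alpha_{n-1}$; to reach the form \eqref{l_b_barra} with indices $n-1,n-2$ you still need the recursion $\gamma_n=\gamma_{n-1}+\overline{\alpha}_{n-1}\alpha_{n-2}$ from \eqref{lnn1}, which is why the paper cites \eqref{lnn1} alongside the two coefficient formulas. Second, and more seriously, your substitution in part (ii) is backwards: \eqref{l_b_barra} already expresses $\gamma_n$ (not $\overline{\gamma}_n$), so since \eqref{antiga50} contains $\gamma_n$ you must use \eqref{l_b_barra} itself — best in the shifted form $\gamma_n=\overline{b}-(b+n+1)\overline{\alpha}_n\alpha_{n-1}$ — and \emph{not} "the conjugate of \eqref{l_b_barra}". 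The expression you propose to insert, $b-(\overline{b}+n-1)\alpha_{n-1}\overline{\alpha}_{n-2}$, is in fact $\overline{\gamma}_n$; substituting it for $\gamma_n$ yields $-n(1+r)-b+2i\,\eta|b|^2/|b+n|^2$ rather than $-n(1+r)-\overline{b}$, so the cross terms do not cancel whenever $\eta\neq 0$ (they only cancel in the real-coefficient cases you planned to spot-check, which would mask the error). With the correct $\gamma_n=\overline{b}-(b+n+1)\overline{\alpha}_n\alpha_{n-1}$, the term $[ib_2-(n-1)a_2]\overline{\alpha}_n\alpha_{n-1}=-(b+n+1)\overline{\alpha}_n\alpha_{n-1}$ cancels at once and $\s_{n,n}=-[\overline{b}+n(r+1)]$ follows; the rest of your argument then goes through.
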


We now observe the explicit value of $\gamma_n$ in \eqref{l_b_barra}, for $n+1$ and using \eqref{lnn1} we have
$\overline{b}  =  \gamma_{n} + (b+n+1)\overline{\alpha}_{n}\alpha_{n-1}.$
On the other hand, from \eqref{l_b_barra}, we get
$\overline{b}  =   \gamma_{n} +  (b+n-1) \overline{\alpha}_{n-1}\alpha_{n-2}.$
Hence,
\begin{equation}  \label{1111}
\alpha_{n}\overline{\alpha}_{n-1} =
\frac{(\overline{b}+n-1)}{(\overline{b}+n+1)} {\alpha_{n-1}\overline{\alpha}_{n-2}}.
\end{equation}
Therefore, we can show the following results.

\begin{proposition}
The coefficients $\alpha_n$ and $ \gamma_{n}$ satisfy
\begin{equation} \label{lnn1-novo}
\alpha_{n}\overline{\alpha}_{n-1} =    \frac{ |b|^2  }{(\overline{b}+n)(\overline{b}+n+1)}
\quad \mbox{and} \quad 
 \gamma_{n} = \frac{n\overline{b}}{b+n}. 
\end{equation}
\end{proposition}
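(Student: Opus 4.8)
The plan is to derive the closed forms in \eqref{lnn1-novo} starting from the recursion \eqref{1111}, which already tells us that the product $\alpha_n\overline{\alpha}_{n-1}$ satisfies a first-order linear recurrence with variable coefficient $(\overline{b}+n-1)/(\overline{b}+n+1)$. First I would set $p_n := \alpha_n\overline{\alpha}_{n-1}$ for $n\geqslant 1$ and rewrite \eqref{1111} as $(\overline{b}+n+1)\,p_n = (\overline{b}+n-1)\,p_{n-1}$. Telescoping this from a base index down to $n$, the right-hand factors collapse: the product $\prod_{j}(\overline{b}+j-1)/(\overline{b}+j+1)$ is telescoping and leaves only the two ``boundary'' factors $(\overline{b}+\text{base})(\overline{b}+\text{base}+1)$ in the numerator and $(\overline{b}+n)(\overline{b}+n+1)$ in the denominator. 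To pin down the constant I would use the initial data \eqref{alfas}, namely $\alpha_n = -(b)_{n+1}/(\overline{b}+1)_{n+1}$, to compute $p_1 = \alpha_1\overline{\alpha}_0$ explicitly in terms of Pochhammer symbols; after simplification this should give $p_1 = |b|^2/[(\overline{b}+1)(\overline{b}+2)]$ (using $\overline{(b)_2} = (\overline{b})_2$ and $(b)_2 = b(b+1)$), which exhibits the expected form with $n=1$ and fixes the numerator constant as $|b|^2$. Combining the telescoped recursion with this base case yields the first identity $\alpha_n\overline{\alpha}_{n-1} = |b|^2/[(\overline{b}+n)(\overline{b}+n+1)]$.

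For the second identity, $\gamma_n = n\overline{b}/(b+n)$, I would substitute the just-obtained expression for $\alpha_{n-1}\overline{\alpha}_{n-2}$ into \eqref{l_b_barra}, which reads $\gamma_n = \overline{b} - (b+n-1)\overline{\alpha}_{n-1}\alpha_{n-2}$. Since $\overline{\alpha}_{n-1}\alpha_{n-2} = \overline{\alpha_{n-1}\overline{\alpha}_{n-2}} = \overline{|b|^2/[(\overline{b}+n-1)(\overline{b}+n)]} = |b|^2/[(b+n-1)(b+n)]$, the factor $(b+n-1)$ cancels, giving $\gamma_n = \overline{b} - |b|^2/(b+n) = [\overline{b}(b+n) - |b|^2]/(b+n) = n\overline{b}/(b+n)$, using $|b|^2 = b\overline{b}$. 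This is a short computation once the first formula is in hand.

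The main obstacle, such as it is, lies in bookkeeping rather than in any conceptual difficulty: one must be careful about the index at which the telescoping recursion is anchored and must verify that the base case from \eqref{alfas} genuinely matches the claimed form (in particular that $\overline{(b)_2}$ equals $(\overline{b})_2$ and that the $\Gamma$- or Pochhammer-ratio manipulations are valid, which they are since $\lambda > -1/2$ keeps all relevant quantities well-defined and nonzero). One should also note that \eqref{1111} presupposes $n \geqslant 2$, so the first identity is derived for $n \geqslant 2$ and then seen to hold for $n=1$ directly from the base-case computation, while $\gamma_0 = 0$ matches $n\overline{b}/(b+n)$ at $n=0$; thus the stated formulas hold in the asserted ranges. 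An alternative, entirely self-contained route would bypass \eqref{1111} and simply plug \eqref{alfas} directly into \eqref{lnn1} for $\gamma_n$ and into the product $\alpha_n\overline{\alpha}_{n-1}$, reducing everything to Pochhammer-symbol algebra; I would keep this in reserve as a cross-check but present the recursion-based argument as the cleaner one.
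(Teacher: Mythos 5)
Your proposal is correct and follows essentially the same route as the paper: telescoping the recursion \eqref{1111} to get $\alpha_n\overline{\alpha}_{n-1}$ and then substituting into \eqref{l_b_barra} to obtain $\gamma_n$. The only cosmetic differences are that the paper anchors the telescoping at $n=0$ using $\alpha_{-1}=-1$ and $\alpha_0=-b/(\overline{b}+1)$ rather than at $n=1$ via \eqref{alfas}, and applies the relation $\gamma_n+(b+n+1)\overline{\alpha}_n\alpha_{n-1}=\overline{b}$ (i.e.\ \eqref{l_b_barra} at index $n+1$) instead of \eqref{l_b_barra} at index $n$.
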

\begin{proof}
Since $ \alpha_{0} = -b/(\overline{b}+1)$ and $\alpha_{-1} = -1$, using \eqref{1111} we get
the first relation  in \eqref{lnn1-novo}. 

Second equation in \eqref {lnn1-novo} follows since  $\gamma_{n} + (b+n+1)\overline{\alpha}_{n} \alpha_{n-1}  = \overline{b}.$ 
\end{proof}

\subsubsection{Non-linear difference equations for complex Verblunsky coefficients}

Using Theorem \ref{TEO_eq_de_dif_caso_geral} 
we find non-linear difference equations for complex Verblunsky coefficients associated with the weight function \eqref{wf_interesse_teta}.

\begin{theorem}  \label{Theoalphas_1}
The Verblunsky coefficients satisfy the following non-linear difference equations
\begin{equation} \label{ref_dif_A}
(b+n+1) \alpha_{n} = [n+2+ \overline{b} - (b+n+1)\overline{\alpha}_{n+1} \alpha_{n}] \dfrac{\alpha_{n+1}}{1-|\alpha_{n+1}|^2}, \quad   n \geqslant 0,
\end{equation}
\begin{equation} \label{caso_4b}
(\overline{b}+n+1)\alpha_n = (b+n)  \alpha_{n-1},
\quad  n \geqslant 1,
\end{equation}
\begin{equation} \label{caso_3a}
(\overline{b}+n+1) \alpha_n + (b+n-1) \alpha_{n-2} =
 \frac{2[\Re(\gamma_{n}) +n]\alpha_{n-1} \, }{1- |\alpha_{n-1}|^2}, \quad  n \geqslant 2,
\end{equation}
\begin{equation} \label{caso_2a}
(\overline{b}+n+1)\alpha_{n} - (b+n-1)\alpha_{n-2} = -  \dfrac{2i \Im(\gamma_n) \, \alpha_{n-1}}{1-|\alpha_{n-1}|^2}, \quad   n \geqslant 2,
\end{equation}
\begin{equation} \label{RD_caso3}
( \overline{b}+n+1)\alpha_n  
+(b+n-1)(1-|\alpha_{n-1}|^2) \alpha_{n-2}
= \left(\frac{n\overline{b}}{b+n} +  b+2n \right) \alpha_{n-1}, \quad n \geqslant 2,
\end{equation}
and
\begin{equation} \label{RD_caso2}
( \overline{b}+n+1)\alpha_n - (b+n-1)(1-|\alpha_{n-1}|^2) \alpha_{n-2} = - \left(\frac{n\overline{b}}{b+n} - b\right) \, \alpha_{n-1}, 
\quad n \geqslant 2.
\end{equation}
\end{theorem}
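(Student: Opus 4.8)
The plan is to feed each of the Pearson-type data pairs $(A,B)$ identified in Section \ref{sec_Characterization} (and recalled in the itemized list preceding this theorem) into the general non-linear difference equations \eqref{eq_de_dif_caso_geral} and \eqref{eq_de_dif_caso_geral_2} of Theorem \ref{TEO_eq_de_dif_caso_geral}, then simplify using the closed forms \eqref{lnn1-novo} for $\gamma_n$ and $\alpha_n\overline{\alpha}_{n-1}$ established in the previous proposition. Concretely: for \eqref{ref_dif_A} and \eqref{caso_4b} I would use $A(z)=z-1$, i.e. $a_2=0$, $a_1=1$, $a_0=-1$, with $B(z)=i[(b+1)z+\overline{b}]$, so $b_2=0$, $b_1=i(b+1)$, $b_0=i\overline{b}$; substituting into \eqref{eq_de_dif_caso_geral_2} gives \eqref{ref_dif_A} after a shift of index (replace $n$ by $n+1$), while \eqref{eq_de_dif_caso_geral} with the same data collapses — because $a_2=b_2=0$ — to a purely first-order relation, which after using $\gamma_n=n\overline{b}/(b+n)$ and clearing denominators becomes \eqref{caso_4b}. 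For \eqref{RD_caso3} and \eqref{RD_caso2} I would use $A(z)=z^2-1$, so $a_2=1$, $a_1=0$, $a_0=-1$, with $B(z)=i[(b+2)z^2+(b+\overline{b})z+\overline{b}]$; plug into \eqref{eq_de_dif_caso_geral} for \eqref{RD_caso2} and into \eqref{eq_de_dif_caso_geral_2} for \eqref{RD_caso3}, again substituting \eqref{lnn1-novo}.

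The remaining two equations \eqref{caso_3a} and \eqref{caso_2a} I would obtain not by a fresh substitution but by taking suitable linear combinations: adding and subtracting \eqref{eq_de_dif_caso_geral} and \eqref{eq_de_dif_caso_geral_2} for the degree-2 case $A(z)=(z-1)(z-r)$ with $r$ left general, then specializing. Alternatively — and this is cleaner — note that \eqref{caso_3a} and \eqref{caso_2a} are respectively the "real part" and "imaginary part" packaging of the degree-2 relations: the combination $(\overline b+n+1)\alpha_n \pm (b+n-1)\alpha_{n-2}$ on the left is what \eqref{eq_de_dif_caso_geral} produces with $a_2=1$, $a_0=-1$ once one recognizes $(n-1)\overline a_2+i\overline b_2 = \overline b+n+1$ and $(n-1)\overline a_0-i\overline b_0 = -(b+n-1)$ for the appropriate $B$; the right-hand sides then involve $n\overline a_1-\gamma_n\overline a_0-\overline\gamma_n\overline a_2$, which with $a_1=0$ reduces to $\gamma_n+\overline\gamma_n = 2\Re(\gamma_n)$ or $\gamma_n-\overline\gamma_n$ depending on the sign choice. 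So \eqref{caso_3a} comes from $A(z)=(z-1)^2$ and \eqref{caso_2a} from $A(z)=z^2-1$ with the matching $B$, or vice versa — I would check the bookkeeping against the structure relations \eqref{SR3}, \eqref{SR2} in Theorem \ref{TheoSR} to make sure the coefficients line up.

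The only genuinely delicate point is the index bookkeeping and the repeated use of $\gamma_n=n\overline b/(b+n)$, $\gamma_{n+1}=(n+1)\overline b/(b+n+1)$, together with $1-|\alpha_{n-1}|^2$; in particular \eqref{ref_dif_A} is stated for $n\geqslant 0$ whereas Theorem \ref{TEO_eq_de_dif_caso_geral} requires $n\geqslant 2$, so I would verify the boundary cases $n=0,1$ directly from the explicit Verblunsky coefficients \eqref{alfas} — this amounts to checking that $(b+1)\alpha_0 = [2+\overline b - (b+1)\overline\alpha_1\alpha_0]\alpha_1/(1-|\alpha_1|^2)$, a finite computation using $\alpha_0=-b/(\overline b+1)$ and $\alpha_1=-b(b+1)/[(\overline b+1)(\overline b+2)]$. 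I expect the main obstacle to be purely organizational — keeping track of which $(A,B)$ yields which equation and not dropping factors of $i$ when passing between $b_k$ and $\overline b_k$ — rather than any conceptual difficulty, since all the machinery is already in place.
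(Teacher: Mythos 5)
Your overall strategy is exactly the paper's: feed each Pearson pair $(A,B)$ from the itemized list into \eqref{eq_de_dif_caso_geral} and \eqref{eq_de_dif_caso_geral_2} and simplify with \eqref{l_b_barra} and \eqref{lnn1-novo}. But your dictionary between Pearson pairs and target equations is scrambled, and one entry is a genuine gap: \eqref{caso_4b} cannot be obtained from $A(z)=z-1$ as you describe. With $a_2=b_2=0$ both master equations lose their $\alpha_n$ term entirely and collapse to $(b+n-1)\alpha_{n-2}=(n+\gamma_n)\,\alpha_{n-1}/(1-|\alpha_{n-1}|^2)$ (the paper's \eqref{caso_1a}); this relates $\alpha_{n-2}$ to $\alpha_{n-1}$ with a residual factor $1-|\alpha_{n-1}|^2$, and no amount of substituting $\gamma_n=n\overline{b}/(b+n)$ turns it into the clean first-order relation \eqref{caso_4b} without circularly invoking the explicit formula \eqref{alfas}. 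The pair you need — and never mention — is $A(z)=z(z-1)$, $B(z)=i[(b+2)z^2+(\overline{b}-1)z]$: there $a_0=b_0=0$ kills the $\alpha_{n-2}$ term, and \eqref{eq_de_dif_caso_geral_2} gives \eqref{caso_4b} directly after the factors $1-|\alpha_{n-1}|^2$ cancel.

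Two further misassignments, both recoverable but worth fixing. First, with $A(z)=z^2-1$, equation \eqref{eq_de_dif_caso_geral} gives \eqref{caso_2a} (not \eqref{RD_caso2}) and \eqref{eq_de_dif_caso_geral_2} gives \eqref{RD_caso2} (not \eqref{RD_caso3}); equation \eqref{RD_caso3} requires $A(z)=(z-1)^2$ fed into \eqref{eq_de_dif_caso_geral_2}. Second, your remark that the right-hand side reduces via ``$a_1=0$'' to $\pm 2\Re(\gamma_n)$ or $2i\Im(\gamma_n)$ is only half right: for $(z-1)^2$ one has $a_1=-2$, and it is precisely the term $n\overline{a}_1=-2n$ that produces the $2n$ inside $2[\Re(\gamma_n)+n]$ in \eqref{caso_3a}. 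Minor points: the shift taking \eqref{caso_1a} to \eqref{ref_dif_A} is $n\mapsto n+2$, not $n+1$, which is also why no separate verification of $n=0,1$ is needed (the range $n\geqslant 2$ maps onto $n\geqslant 0$); and while the paper obtains \eqref{ref_dif_A} from \eqref{eq_de_dif_caso_geral} rather than \eqref{eq_de_dif_caso_geral_2}, in the degree-one case the two coincide after applying \eqref{l_b_barra}, so that particular choice is harmless.
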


\begin{proof} $ $ 

$\bullet$ When $A(z)=z-1$ and
$B(z) = i [(b+1)z+\overline{b}],$ equation \eqref{eq_de_dif_caso_geral} yields 
\begin{equation} \label{caso_1a}
(b+n-1)\alpha_{n-2} = (n+\gamma_{n}) \dfrac{\alpha_{n-1}}{1-|\alpha_{n-1}|^2}, \quad n \geqslant 2,
\end{equation}
and using \eqref{l_b_barra}  we obtain \eqref{ref_dif_A}. 

$\bullet$ For $A(z)=z(z-1)$ and
$ B(z) = i [(b+2)z^{2} +(\overline{b}-1)z],$  equation \eqref{eq_de_dif_caso_geral} is 
\begin{equation} \label{caso_4a}
(\overline{b}+n+1)\alpha_n =
(n+\overline{\gamma}_{n}) \dfrac{\alpha_{n-1}}{1-|\alpha_{n-1}|^2} \quad n \geqslant 1,
\end{equation}
and relation \eqref{eq_de_dif_caso_geral_2} gives  \eqref{caso_4b}.

$\bullet$ If $A(z)=(z-1)^2$ and
$B(z) = i [(b+2)z^{2} +(\overline{b}-b-2)z -\overline{b}],$ \eqref{eq_de_dif_caso_geral} yields \eqref{caso_3a}.

The relation \eqref{eq_de_dif_caso_geral_2} becomes
\begin{equation} \label{caso_3b}
(\overline{b}+n+1)\alpha_n +
(b+n-1)(1-|\alpha_{n-1}|^2) \alpha_{n-2}
= \left(\gamma_n +  b+2n \right) \, \alpha_{n-1}. 
\end{equation}
Using the second equation in \eqref{lnn1-novo}, the  equation \eqref{caso_3b} can be written as \eqref{RD_caso3}.

$\bullet$ When $A(z)=z^2-1$ and
$B(z) = i [(b+2)z^{2} +(b+\overline{b})z +\overline{b}],$ 
 \eqref{eq_de_dif_caso_geral} yields \eqref{caso_2a}.

From relation \eqref{eq_de_dif_caso_geral_2} we get
\begin{equation*}
(\overline{b}+n+1)\alpha_{n} -  (b+n-1)\alpha_{n-2}  (1-|\alpha_{n-1}|^2) =
- [2\gamma_n -b-\overline{b} +(b+n+1)\overline{\alpha}_{n}\alpha_{n-1}] \alpha_{n-1}.
\end{equation*}
Using \eqref{lnn1} and \eqref{l_b_barra}, the above equation implies to
\begin{equation} \label{caso_2b}
(\overline{b}+n+1)\alpha_{n} - (b+n-1)\alpha_{n-2}  (1-|\alpha_{n-1}|^2) = - ( \gamma_n -b) \alpha_{n-1},
\end{equation}
we thus obtain \eqref{RD_caso2} from \eqref{lnn1-novo}.

We remark that 
it is possible  to obtain  \eqref{caso_4b} by adding  \eqref{caso_3b} and \eqref{caso_2b}.
It is possible to obtain \eqref{caso_2a} by subtracting \eqref{caso_4a} from \eqref{caso_1a}.  
\end{proof}

\begin{remark}
Using the non-linear difference equation \eqref{caso_4b}, we can write
\begin{equation*}
\alpha_n = \dfrac{b+n}{\overline{b}+n+1}\dfrac{b+n-1}{\overline{b}+n}\cdots\dfrac{b}{\overline{b}+1}\alpha_{-1}, \quad n \geqslant 0,
\end{equation*}
and since $\alpha_{-1}=-1$, we obtain the explicit formula for the Verblunsky coefficients \eqref{alfas}.
\end{remark}

Furthermore, using another method (see \cite{Va08}), we find one more non-linear difference equation for the complex Verblunsky coefficients associated with the weight function \eqref{wf_interesse_teta}.

\begin{theorem}  
The Verblunsky coefficients satisfy the following non-linear difference equation
\begin{equation} \label{RD_caso1}
\overline{\alpha}_{n} ( \alpha_{n-1}  + \alpha_{n} ) =
 (\overline{b}+n) (|\alpha_{n-1}|^2 -|\alpha_{n}|^2 ),
 \quad n \geqslant 1. 
\end{equation}
\end{theorem}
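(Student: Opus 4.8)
The plan is to obtain \eqref{RD_caso1} as a short algebraic consequence of the difference equation \eqref{caso_4b} already established, namely $(\overline{b}+n+1)\alpha_n=(b+n)\alpha_{n-1}$ for $n\geqslant 1$, together with the elementary fact that $\overline{b}+k=\overline{b+k}$ for every real $k$ (recall $b=\lambda+i\eta$ with $\lambda,\eta\in\mathbb{R}$), so that in particular $|\overline{b}+n+1|=|b+n+1|$; note also that $b+n\neq 0$ and $\overline{b}+n\neq 0$ for $n\geqslant 1$ since $\lambda>-1/2$.

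First I would rewrite \eqref{caso_4b} as $\alpha_{n-1}=\dfrac{\overline{b}+n+1}{b+n}\,\alpha_n$ and substitute it into the left-hand side of \eqref{RD_caso1}. Putting the two terms over the common denominator $b+n$ gives $\alpha_{n-1}+\alpha_n=\alpha_n\,(b+\overline{b}+2n+1)/(b+n)$, hence $\overline{\alpha}_n(\alpha_{n-1}+\alpha_n)=|\alpha_n|^2\,(b+\overline{b}+2n+1)/(b+n)$.

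Next I would treat the right-hand side. Taking moduli in $\alpha_{n-1}=\frac{\overline{b}+n+1}{b+n}\alpha_n$ and using $|\overline{b}+n+1|=|b+n+1|$ yields $|\alpha_{n-1}|^2=\dfrac{|b+n+1|^2}{|b+n|^2}\,|\alpha_n|^2$, so $|\alpha_{n-1}|^2-|\alpha_n|^2=|\alpha_n|^2\,\dfrac{|b+n+1|^2-|b+n|^2}{|b+n|^2}$. A one-line computation with $c=b+n$, using $|c+1|^2-|c|^2=c+\overline{c}+1$, shows $|b+n+1|^2-|b+n|^2=b+\overline{b}+2n+1$, while $|b+n|^2=(b+n)(\overline{b}+n)$. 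Multiplying by $\overline{b}+n$ and cancelling, the right-hand side of \eqref{RD_caso1} collapses to $|\alpha_n|^2\,(b+\overline{b}+2n+1)/(b+n)$, which coincides with the left-hand side computed above; this proves \eqref{RD_caso1}.

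There is essentially no genuine obstacle here: the one point requiring care is the bookkeeping of complex conjugates, in particular the identity $\overline{b}+n+1=\overline{b+n+1}$ (valid because $n+1$ is real), which legitimizes the modulus manipulations. An entirely equivalent route would be to substitute the closed form \eqref{alfas}, $\alpha_n=-(b)_{n+1}/(\overline{b}+1)_{n+1}$, and simplify the resulting Pochhammer ratios; and, as the sentence preceding the statement indicates, \eqref{RD_caso1} can also be derived by the ladder-operator/compatibility method of \cite{Va08}, where it appears as an identity of ``$S_2'$'' type attached to the weight \eqref{wf_interesse_teta}.
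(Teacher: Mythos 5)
Your argument is correct: with $c=b+n$ one has $|c+1|^2-|c|^2=c+\overline{c}+1$, the division by $b+n$ is legitimate since $\Re(b+n)=\lambda+n>1/2$ for $n\geqslant 1$, and both sides of \eqref{RD_caso1} indeed collapse to $|\alpha_n|^2\,(b+\overline{b}+2n+1)/(b+n)$. However, this is a genuinely different route from the paper's. The paper does not deduce \eqref{RD_caso1} from \eqref{caso_4b}; it differentiates the Szeg\H{o} recurrence \eqref{Szegorecorrence}, substitutes the structure relation \eqref{SR1} for $A(z)=z-1$ into both sides, and compares the coefficients of $z^{n}$ to obtain $\s_{n,n-1}-\s_{n+1,n}=\overline{\alpha}_{n}\alpha_{n-1}+1$, which with $\s_{n,n-1}=-(\overline{b}+n)(1-|\alpha_{n-1}|^2)$ gives \eqref{RD_caso1}. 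This compatibility argument is precisely the ``another method (see \cite{Va08})'' announced just before the theorem: its virtue is that it uses only the structure relation and the recurrence, so it would produce a non-linear difference equation even for weights whose Verblunsky coefficients admit no closed form and satisfy no linear two-term relation such as \eqref{caso_4b}. Your derivation buys brevity and a clean verification, but since \eqref{caso_4b} already determines every $\alpha_n$ from $\alpha_{-1}$, obtaining \eqref{RD_caso1} as its formal consequence does not exhibit the independent mechanism the paper is illustrating; as a proof of the stated identity, though, it is complete and sound.
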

\begin{proof}
  From relation \eqref{Szegorecorrence} we get
\begin{equation*} 
\Phi_{n+1}^\prime(z) =  z\Phi_{n}^\prime(z) + \Phi_{n}(z) - \overline{\alpha}_{n} \Phi_{n}^{* \prime}(z)
\end{equation*}
using the structure relation \eqref{SR1}, 
\begin{equation*}
 \frac{(n+1) \Phi_{n+1}(z) + \s_{n+1,n} \Phi_{n}(z)}{z-1} =
 \frac{z\left[n \Phi_n(z) +\s_{n,n-1}  \Phi_{n-1}(z)\right]}{z-1}  + \frac{(z-1)\left[ \Phi_{n}(z) - \overline{\alpha}_{n} \Phi_{n}^{* \prime}(z) \right]}{z-1},
\end{equation*}
where $\s_{n+1,n} = -(\overline{b}+n+1)[1-|\alpha_{n}|^2]$ and  $\s_{n,n-1} = -(\overline{b}+n)[1-|\alpha_{n-1}|^2]$.

Again using relation \eqref{Szegorecorrence}, we get
\begin{align*}
(n+1) \Phi_{n+1}(z) + \s_{n+1,n} \Phi_{n}(z) =
& (n+1)[\Phi_{n+1}(z) + \overline{\alpha}_{n} \Phi_{n}^{*}(z)] + \s_{n,n-1} [\Phi_{n}(z) + \overline{\alpha}_{n-1} \Phi_{n-1}^{*}(z)] \\[0.5ex]
& - \overline{\alpha}_{n} z \Phi_{n}^{* \prime}(z)  - \Phi_{n}(z) + \overline{\alpha}_{n} \Phi_{n}^{* \prime}(z).
\end{align*}
Since  $\Phi_{n}^{*}(z) = - \alpha_{n-1} z^n + \cdots $  and 
$z \Phi_{n}^{* \prime}(z) = - \alpha_{n-1} n z^n + \cdots,$
it follows that
\begin{align*}
 \s_{n+1,n} \Phi_{n}(z) = 
& (n+1)  \overline{\alpha}_{n} \left[  - \alpha_{n-1} z^n + \cdots \right]  + \s_{n,n-1}\left[\Phi_{n}(z) + \overline{\alpha}_{n-1} \Phi_{n-1}^{*}(z) \right] \\[0.5ex]
&  +\left[  \overline{\alpha}_{n} \alpha_{n-1} n z^n + \cdots \right] - \Phi_{n}(z) + \overline{\alpha}_{n} \Phi_{n}^{* \prime}(z).
\end{align*}
Comparing the coefficients of $z^{n}$, we obtain
$ \s_{n+1,n} = - (n+1)  \overline{\alpha}_{n}  \alpha_{n-1} + \s_{n,n-1} +  \overline{\alpha}_{n} \alpha_{n-1} n -1$
and
$ \s_{n,n-1} - \s_{n+1,n} = \overline{\alpha}_{n} \alpha_{n-1}  + 1. $
Now, one can write
\begin{equation*}
 \overline{\alpha}_{n} \alpha_{n-1} 
= (\overline{b}+n+1)(1-|\alpha_{n}|^2) - 
(\overline{b}+n) (1- |\alpha_{n-1}|^2) - 1
\end{equation*}
and, finally \eqref{RD_caso1} is obtained.
\end{proof} 

\subsubsection{Special case}
Here we consider the semi-classical weight function  \eqref{exp_cos}, namely, for $\eta \in \mathbb{R}$ and $\beta > -1/2$,
\begin{equation*} 
w(\theta)=\tau(\beta,\eta) \, e^{-\theta \, \eta}\, [\cos^2(\theta/2)]^{\beta}, \quad \theta \in [-\pi,\pi].
\end{equation*}

This weight function satisfies the Pearson-type equation \eqref{Eq-Tipo-Pearson-1} with several polynomials $A$ and $B$. Denoting $c = \beta + i \eta$, we can calculate these polynomials:

\begin{enumerate}
\item[i)] When $A(z)$ has degree 1, $A(z)=z+1$ and
$B(z) = i [(c+1)z-\overline{c}].$

\item[ii)] When $A(z)$ has degree 2, $A(z)=(z+1)(z-r)$ and
$ B(z)  = i[(c+2)z^2 - (\overline{c}-1+r(c+1))z + r\overline{c}].$
\end{enumerate}

Similarly, using theorems \ref{prop_rel_est} and \ref{TEO_eq_de_dif_caso_geral}, it is possible to generate several structure relations for the MOPUC and non-linear difference equations for the Verblunsky coefficients. Also we  find that the associated Verblunsky coefficients    
and the MOPUC are
$$\alpha_{n} = (-1)^n \frac{(c)_{n+1}}{(\overline{c}+1)_{n+1}} \quad \mbox{and} \quad \Phi_{n}(z)  = (-1)^n\frac{(c+\overline{c}+1)_n}{(c+1)_n} {}_2F_1(-n,c+1,c+\overline{c}+1;1+z).$$

\subsection{Example 2}
\label{subsection42}

Here we consider the semi-classical weight function  \eqref{Teo3.1}   
\begin{equation*} 
w(\theta) = \tau(u) e^{2|u| \sin(\theta + \arg  u)},
\end{equation*}
where $u \in \mathbb{C}$  and the constant $\tau(u)$ is such that $ \mu_0 =1. $ 
We observe that if $u=0$, then
$ w(\theta) = 1/(2\pi)$, the Lebesgue measure.

Considering $u \neq 0$, as presented in Subsection \ref{subsection321}, see Remark \ref{remark31}, this weight function satisfies Pearson-type equation \eqref{Eq-Tipo-Pearson-1} with $A(z)=z$ and $B(z) = u z^2 + i z + \overline{u}.$ 

From Theorem \ref{prop_rel_est}, the structure relation for the MOPUC is
\begin{equation*} 
z\Phi_{n}'(z)= i \overline{u} (1-|\alpha_{n-1}|^2) \Phi_{n-1}(z) + (n + i u \overline{\alpha}_n \alpha_{n-1})\Phi_{n}(z) + i u \overline{\alpha}_{n} \Phi_{n}^{\ast}(z).
\end{equation*}

Using the definition \eqref{antiga49} for $\s_{n,n}$, we also find that 
$\s_{n,n} = - i \overline{u} \alpha_n \overline{\alpha}_{n-1} +n.$
Therefore, the Verblunsky coefficients satisfy
\begin{equation*}
u \overline{\alpha}_{n} \alpha_{n-1} = - \overline{u} \alpha_n \overline{\alpha}_{n-1}, \quad n \geqslant 2.
\end{equation*}
It means that $\Re (u \overline{\alpha}_{n} \alpha_{n-1}) =0$. Since $u \neq 0$, we find 
\begin{equation*}
\frac{\alpha_n}{\overline{\alpha}_{n}} =  
\frac{\alpha_{1}}{\overline{\alpha}_{1}}
\left( -\frac{u}{\overline{u}} \right)^{n-1}, \quad n \geqslant 2.
\end{equation*}

Moreover, from  
\eqref{eq_de_dif_caso_geral} the Verblunsky coefficients  also satisfy
\begin{equation}  \label{ED_iu}
\overline{i u} \alpha_{n} + i u \alpha_{n-2} =  \frac{n \,\alpha_{n-1}}{1-|\alpha_{n-1}|^2}, \quad n \geq 2.
\end{equation}
Comparing with \eqref{Painleve}, the non-linear difference equation \eqref{ED_iu} can be seen as a complex extension of the Painlev\'{e} discrete equation 
${\rm dP_{II}}$.

Finally, the non-linear difference equation
\eqref{eq_de_dif_caso_geral_2}  for the Verblunsky coefficients  becomes
\begin{equation*} 
 \overline{iu} \, \alpha_{n} + iu \, \alpha_{n-2}({1-|\alpha_{n-1}|^2}) = 
  (n + i u \overline{\alpha}_{n}\alpha_{n-1}) \alpha_{n-1}, \quad n \geq 2.
\end{equation*}

\subsection{Example 3}
 
Here we consider the semi-classical weight function  \eqref{Teo3.2}, namely, for $u, r \in \mathbb{C}$, $|r| \neq 1$, $r \neq 0$ and the constant $\tau(u,r)$ is such that  $ \mu_0=1, $ 
\begin{equation*}
w(\theta)  =  \tau(u, r) \, e^{2\Re(u/\overline{r}) \arg(1 -r e^{-i\theta})} |e^{i\theta}-r|^{-2\Im(u/\overline{r})}.
\end{equation*}

From the first result of Subsection \ref{subsection334} and Remark \ref{remark39}, we know that this weight function satisfies the Pearson-type equation with
$A(z)= (z-r)(z-1/\overline{r}) $ 
and 
$B(z)=\left(2i- u/\overline{r}\right)z^2
+[2\Re(ur)-(|r|^2+1)i]z/\overline{r} 
-\overline{u}/\overline{r}.$

Hence, from \eqref{Est-geral2-nova} the MOPUC satisfy the structure relation
\begin{equation*}
(z-r)\left( z- 1/\overline{r} \right) \Phi_{n}'(z)=
n   \Phi_{n+1}(z)
+ \s_{n,n}\Phi_{n}(z)
+ \s_{n,n-1}\Phi_{n-1}(z)
- \left(1+\frac{u}{\overline{r}} i\right)\overline{\alpha}_n \Phi_{n}^{\ast}(z),
\end{equation*}
with
\begin{equation*}
\s_{n,n-1}= \frac{n r - \overline{u}i}{\overline{r}} (1-|\alpha_{n-1}|^2) \ \mbox{and} \
\s_{n,n}  = - \left[n  \left( \frac{|r|^2 +1}{\overline{r}} \right) + \gamma_n +  \left( n+1 + \frac{u}{\overline{r}}i \right)\overline{\alpha}_n \alpha_{n-1} \right].  
\end{equation*}
Also, from \eqref{antiga49}
\begin{equation*}
 \s_{n,n}   = - \left[ n  \left( \frac{|r|^2 +1}{\overline{r}} \right) 
+ \frac{r}{\overline{r}} \overline{\gamma}_n 
+\left( (n+1)\frac{r}{\overline{r}} - \frac{\overline{u}}{\overline{r}}i \right)\alpha_n \overline{\alpha}_{n-1}
\right] + \frac{2\Re(ur)}{\overline{r}} i.
\end{equation*}
Comparing the two forms to represent the coefficient $\s_{n,n}$, we conclude that   
\begin{equation*}
\Im(\overline{r} \gamma_{n} +  [(n+1)\overline{r}+ i u]\overline{\alpha}_{n} \alpha_{n-1}) =   -\Re(ur).
\end{equation*}

From Theorem \ref{TEO_eq_de_dif_caso_geral}, the difference equation \eqref{eq_de_dif_caso_geral} for the Verblunsky coefficients become
\begin{equation*} 
[(n+1) r -  \overline{u}i]\alpha_n +[(n-1) \overline{r} + ui]  \alpha_{n-2} = \left[n(|r|^2 + 1) + 2\Re( r \overline{\gamma}_{n})\right] \frac{\alpha_{n-1}}{1-|\alpha_{n-1}|^2}, \quad n \geqslant 2.
\end{equation*}
The difference equation
\eqref{eq_de_dif_caso_geral_2}  for the Verblunsky coefficients  is
\begin{align*}
& [(n+1) r - \overline{u}i]\alpha_n + [(n-1)\overline{r}+ui] \alpha_{n-2} (1-|\alpha_{n-1}|^2) \\[0.5ex]
& \qquad \qquad \qquad = \{n(|r|^2 + 1) +2 \Re(ur) i 
+ [(n+1)\overline{r} + ui] \overline{\alpha_n }\alpha_{n-1}
+ 2 \overline{r}\gamma_n\}
\alpha_{n-1}, \quad n \geqslant 2.
\end{align*} 
 
\subsection{Example 4}

Here we consider the semi-classical weight function  \eqref{newfunction},
\begin{equation*} 
w(\theta)=\tau(\lambda,\beta,\eta) \, e^{-\theta \, \eta}\, [\sin^2(\theta/2)]^{\lambda}[\cos^2(\theta/2)]^{\beta}, \quad \theta \in [0,2\pi],
\end{equation*}
where $\tau(\lambda,\beta,\eta)$ is a constant, $\eta \in \mathbb{R}$,  $\lambda >-1/2$ and $\beta > -1/2$, that satisfies the Pearson-type equation \eqref{Eq-Tipo-Pearson-2} with $A(z) =  z^2 -1$ and 
$B(z) = i[(\lambda+\beta + i \eta +2) \, z^2 + 2(\lambda - \beta)z +(\lambda+\beta - i \eta)],$
see Remark \ref{remark35}.

Using Theorem \ref{prop_rel_est} we get the following structure relation for the MOPUC
\begin{equation*} 
(z^2-1)\Phi_{n}'(z) = \s_{n,n-1}\Phi_{n-1}(z)
+\s_{n,n}\Phi_{n}(z) +n\Phi_{n+1}(z)
-(d+1)\overline{\alpha}_{n}\Phi_{n}^{\ast}(z),
\end{equation*}
where  $d= \lambda+\beta + i \eta$, 
\begin{equation*} 
\s_{n,n-1} =  -(\overline{d}+n)(1-|\alpha_{n-1}|^2),
\quad \mbox{ and } \quad
\s_{n,n} = -[(d+n+1)\overline{\alpha}_{n} \alpha_{n-1} + \gamma_{n}].
\end{equation*}
From \eqref{antiga49}, the coefficient $\s_{n,n}$ also can be written as 
\begin{equation*}
\s_{n,n}  = \overline{\gamma}_{n} +(\overline{d}+n+1)\alpha_{n} \overline{\alpha}_{n-1} -2(\lambda - \beta).
\end{equation*}

Comparing the two forms to represent the coefficient $\s_{n,n}$, one sees that
$\Re(\gamma_n + (d+n+1)\overline{\alpha}_{n} \alpha_{n-1}) =  \lambda - \beta.$

Using Theorem \ref{TEO_eq_de_dif_caso_geral} we can provide a non-linear difference equations for the complex Verblunsky coefficients. From \eqref{eq_de_dif_caso_geral} and \eqref{eq_de_dif_caso_geral_2} we obtain, respectively, for   $n \geqslant 2,$
\begin{equation*}
(\overline{d}+n+1) \alpha_n
- (d+n-1) \alpha_{n-2} 
= -  \dfrac{2i \Im(\gamma_n) \,  \alpha_{n-1}}{1-|\alpha_{n-1}|^2}, 
\end{equation*}
and
$(\overline{d}+n+1) \alpha_n 
- (d+n-1) \alpha_{n-2} ({1-|\alpha_{n-1}|^2}) =
- \{ 2\gamma_{n}   +(d+n+1) \overline{\alpha}_{n} \alpha_{n-1}  - 2(\lambda-\beta) \}  \alpha_{n-1}.$

\subsection{Example 5: Real Verblunsky coefficients}

\subsubsection{Jacobi polynomials on the unit circle} 

Choosing $\eta=0$ in \eqref{newfunction} or in Example 4, we have
\begin{equation*}
w(\theta) = \tau(\lambda,\beta)  [\sin^2(\theta/2)]^{\lambda} [\cos^2(\theta/2)]^{\beta}, \quad \lambda, \beta > -1/2, \quad \theta \in [0,2\pi].
\end{equation*}
This weight function was studied  in \cite{Ma00} considering the Jacobi polynomials on the unit circle.
The Verblunsky coefficients are real and given by 
\begin{equation*}
\alpha_{n} = - \Phi_{n+1}(0) = - \frac{\lambda + (-1)^{n+1} \beta}{n+1+\lambda+\beta}.
\end{equation*}
 Observe that the coefficients $\gamma_n$ given by \eqref{lnn1} are also real. 

Since $\eta=0$, the polynomial $B(z)$ becomes 
$B(z) = i[(\lambda+\beta  +2) \, z^2 + 2(\lambda - \beta)z +(\lambda+\beta)]. $
The structure relation for the MOPUC is
\begin{equation*} 
(z^2-1)\Phi_{n}'(z) = \s_{n,n-1}\Phi_{n-1}(z)
+\s_{n,n}\Phi_{n}(z) +n\Phi_{n+1}(z)
-(\lambda+\beta+1) \alpha_n\Phi_{n}^{\ast}(z),
\end{equation*}
where
\begin{equation*}
\s_{n,n-1}   =  -(\lambda+\beta+n)(1-\alpha_{n-1}^2) 
\quad \mbox{and} \quad
\s_{n,n} = -[(\lambda+\beta+n+1)\alpha_n \alpha_{n-1} + \gamma_{n}],
\end{equation*}
and also
$\s_{n,n}  
=\gamma_{n} +(\lambda+\beta+n+1)\alpha_{n} \alpha_{n-1} -2(\lambda - \beta).$

Finally, using Theorem \ref{Theoalphas_1}, we obtain non-linear difference equations for the  Verblunsky coefficients:
\begin{equation*}
(\lambda+\beta+n+1) \alpha_n 
- (\lambda+\beta+n-1) \alpha_{n-2} ({1-\alpha_{n-1}^2}) = - [2\gamma_{n}   +(\lambda+\beta+n+1) \alpha_{n} \alpha_{n-1}  - 2(\lambda-\beta)]  \alpha_{n-1},
\end{equation*}
and
\begin{equation*} 
(\lambda+\beta +n+1) \alpha_n
= (\lambda+\beta +n-1) \alpha_{n-2}, \quad n \geqslant 2.
\end{equation*}
A similar difference equation to the latter one  has appeared in \cite{Ma00}.

\subsubsection{Circular Jacobi polynomials} 

Choosing $\eta=0$ and $\beta=0$ in \eqref{newfunction} or in Example 4, we get
$ w(\theta)=  \tau(\lambda) 
[\sin^2(\theta/2)]^{\lambda}, $ $ \lambda > -1/2,$  $\theta \in [0,2\pi].$
From \eqref{caso_4b}, the Verblunsky coefficients satisfy  
\begin{equation*}
\alpha_{n} = \frac{n+\lambda}{n+1+\lambda} \alpha_{n-1}
\quad \mbox{and} \quad
\alpha_{n} =  -  \frac{\lambda}{{n+1+\lambda}}.
\end{equation*}
Notice that from Theorem \ref{Theoalphas_1} one obtain other different non-linear difference equations for the Verblunsky coefficients.

It is well known that the monic circular Jacobi polynomials satisfy the structure relation
\begin{equation} \label{magnusSR}
(z-1) \Phi_{n}^{\prime}(z) = n \Phi_{n}(z) - \frac{n(n+2\lambda)}{n+\lambda}  \Phi_{n-1}(z),
\end{equation}
 see  \cite{Is05}, \cite{IsWi01}, and \cite{Ma00}. Using Theorem \ref{TheoSR} we also obtain \eqref{magnusSR}. Furthermore, from \eqref{SR4}, \eqref{SR3}, and \eqref{SR2} it follows, respectively, other structure relations for the MOPUC
\begin{equation*} 
z(z-1)\Phi_{n}'(z)=   -(\lambda+n) \Phi_{n}(z)+n\Phi_{n+1}(z) 
+\frac{\lambda(\lambda+1)}{n+1+\lambda}\Phi_{n}^{\ast}(z),
\end{equation*}
\begin{equation*}
(z-1)^2\Phi_{n}'(z)=
\frac{n(n+2\lambda)}{n+\lambda}\Phi_{n-1}(z) -(\lambda+2n) \Phi_{n}(z)+ n \Phi_{n+1}(z)
+\frac{\lambda(\lambda+1)}{n+1+\lambda}\Phi_{n}^{\ast}(z)
\end{equation*}
and
\begin{equation*}
(z^2-1)\Phi_{n}'(z)=
- \frac{n(n+2\lambda)}{n+\lambda}\Phi_{n-1}(z) -\lambda \Phi_{n}(z)+ n \Phi_{n+1}(z)
+\frac{\lambda(\lambda+1)}{n+1+\lambda}\Phi_{n}^{\ast}(z).
\end{equation*}

\subsubsection{Modified Bessel polynomials} 

If $u= i t/2$ with $t>0$, see Remark \ref{remark32}, then $\sin(\theta + \arg  u) = \sin(\theta + \pi/2) = \cos(\theta) $,
\begin{equation*}
w(\theta) = \frac{1}{2\pi} e^{t \cos(\theta)} \quad \mbox{ and }  \quad
\nu(z) = \frac{1}{2\pi} e^{t (z+z^{-1})/2}.
\end{equation*}
In this case the Verblunsky coefficients are real.
The associated weight function $\nu$ satisfies  
$\dfrac{d \nu(z)}{dz} = \frac{t}{2} \left( 1 - \frac{1}{z^2} \right) \nu(z),$
and the MOPUC satisfy the structure relation \eqref{rel_est_conhecida}, mentioned in the introduction, see \cite{Va08}.

From Remark \ref{remark32}, the weight function $w$ satisfies the Pearson-type equation \eqref{Eq-Tipo-Pearson-2}  with  $A(z)=z$ and $B(z) = i \left[ (t/2)z^2 + z -t/2 \right]$. From Corollary \ref{coro2} and $ \kappa_{n-1}^2/\kappa_{n}^2 = 1-|\alpha_{n-1}|^2$, we obtain the following structure relation 
\begin{equation}   \label{SR_real}
 z\Phi_{n}'(z)= n\Phi_{n}(z) + \frac{t}{2}
\frac{\kappa_{n-1}^2}{\kappa_{n}^2}
\left[ \Phi_{n-1}(z)-\alpha_n \Phi_{n-1}^{*}(z) \right].
\end{equation} 
Making $z=0$ in \eqref{SR_real},  we find  
\begin{equation}   \label{SR_dif_real}
\frac{t}{2}
\frac{\kappa_{n-1}^2}{\kappa_{n}^2}
\alpha_n =  -n  \alpha_{n-1} 
- \frac{t}{2} \frac{\kappa_{n-1}^2}{\kappa_{n}^2} \alpha_{n-2}.
\end{equation}   
Substituting \eqref{SR_dif_real},  \eqref{Szegorecorrence}, and 
$\Phi_{n-1}^*(z)=\Phi_{n-2}^*(z)-
\alpha_{n-2} z\Phi_{n-2}(z)$
in the equation \eqref{SR_real}, after some manipulation we obtain \eqref{rel_est_conhecida}. Hence, equation \eqref{SR_real} is equivalent to the structure relation \eqref{rel_est_conhecida}.

\section*{Acknowledgments} 

The authors are grateful to the anonymous referees for their comments and suggestions that helped to improve the manuscript.

This work is part of the doctoral thesis of the author K.S. Rampazzi at UNESP, S\~{a}o Jos\'{e} do Rio Preto, SP, Brazil, supported by a grant from CAPES, Brazil.

The research of the author C.F. Bracciali was supported by the grant 2022/09575-5 from FAPESP, 
S\~{a}o Paulo, Brazil.


\end{document}